\renewcommand{\labelenumi}{(\theenumi)}
\renewcommand{\hat}{\widehat}
\renewcommand{\bar}{\overline}
\newcommand{\absol}[1]{\left| #1 \right|} 
\newcommand{\rbra}[1]{\!\left( #1 \right)} 
\newcommand{\cbra}[1]{\!\left\{ #1 \right\}} 
\newcommand{\sbra}[1]{\!\left[ #1 \right]} 
\newcommand{\bE}{\ensuremath{\mathbb{E}}}
\newcommand{\bF}{\ensuremath{\mathbb{F}}}
\newcommand{\bN}{\ensuremath{\mathbb{N}}}
\newcommand{\bP}{\ensuremath{\mathbb{P}}}
\newcommand{\bR}{\ensuremath{\mathbb{R}}}
\newcommand{\cA}{\ensuremath{\mathcal{A}}}
\newcommand{\cF}{\ensuremath{\mathcal{F}}}
\theoremstyle{plain}
\newtheorem{Thm}{Theorem}[section]
\theoremstyle{definition}
\newtheorem{Ass}[Thm]{Assumption}
\numberwithin{equation}{section}
\renewcommand\section{\@startsection {section}{1}{\z@}%
                                   {-3.5ex \@plus -1ex \@minus -.2ex}%
                                   {2.3ex \@plus.2ex}%
                                   {\normalfont\large\bf}}
\renewcommand\subsection{\@startsection {subsection}{1}{\z@}%
                                   {-3.5ex \@plus -1ex \@minus -.2ex}%
                                   {2.3ex \@plus.2ex}%
                                   {\normalfont\normalsize\bf}}
\definecolor{webgreen}{rgb}{0,.5,0}
\definecolor{webbrown}{rgb}{.8,0,0}
\definecolor{emphcolor}{rgb}{0.5,0.95,0.95}
\ifpdf \hypersetup{pdftex,
	pdfstartview=FitH, 
	bookmarksopen=true,
	bookmarksnumbered=true
} \else \hypersetup{dvips} \fi
\numberwithin{equation}{section}
\newtheorem{problem}{Problem}[section]
\newtheorem{theorem}[problem]{Theorem}
\newtheorem{corollary}[problem]{Corollary}
\newtheorem{remark}[problem]{Remark}
\newtheorem{lemma}[problem]{Lemma}
\newcommand {\R}{\mathbb{R}}
\newcommand {\N}{\mathbb{N}}
\newcommand {\E}{\mathbb{E}}
\newcommand{\diff}{{\rm d}}
\newcommand{\lev}{L\'{e}vy }
\begin{document}

\begin{center}
{\Large \bf 
Refraction strategies in stochastic control: optimality for a general L\'evy process model
}
\end{center}
\begin{center}
Kei Noba, Jos\'e Luis P\'erez and Kazutoshi Yamazaki
\end{center}

\begin{abstract}
We revisit an absolutely-continuous version of the stochastic control problem driven by a \lev process. 
 A strategy must be absolutely continuous with respect to the Lebesgue measure and the running cost function is assumed to be convex.  We show the optimality of a refraction strategy, which adjusts the drift of the state process at a constant rate whenever it surpasses a certain threshold. The optimality holds for a general \lev process, generalizing the spectrally negative case presented in \cite{HerPerYam2016}. \\
\noindent \small{\noindent  \textbf{MSCcodes}: 60G51, 93E20, 90B05 \\
\textbf{Keywords:} stochastic control, inventory models, \lev processes, refraction strategies}
\end{abstract}

\section{Introduction}


In the last couple of decades, there has been great progress in the theory of stochastic control for non-Gaussian processes with jumps. 
In the continuous-time stochastic model, randomness is modeled in terms of a continuous-time stochastic process, called a state process. The objective is to optimally control the path of the state process to maximize or minimize the expected payoffs or costs that depend on the path of the realized controlled state process as well as the control process. A majority of the existing literature focus on the models driven by continuous Gaussian processes. However, in recent years, progress has been made for spectrally one-sided \lev processes (\lev processes with one-sided jumps) thanks to the development of the fluctuation theory and the scale function. 

In this paper, we revisit a version of continuous-time stochastic control under the absolutely continuous assumption. In the classical formulation of singular control, the set of admissible strategies consists of adapted processes of bounded variation, which includes those not implementable in practice.
 The absolutely-continuous model thus imposes an additional restriction requiring that a control process must have a bounded density with respect to the Lebesgue measure.

There is an interesting analogy between the classical singular control model and the absolutely continuous model. In the former, a barrier strategy, aka a reflection strategy, is a natural selection for a candidate optimal strategy. 
In the one-dimensional Brownian or \lev models, a barrier strategy reflects the state process at a suitable boundary in the classical (Skorokhod) sense and thus classical stochastic analysis methods can be directly applied to show its optimality. 
By contrast,  in the case with absolutely-continuous assumptions, where barrier strategies are no longer admissible, a so-called \emph{threshold strategy}, or a \emph{refraction strategy}, becomes a candidate optimal strategy.
It pushes downward (or upward) the state process at the maximal rate whenever the state process is above (or below) a suitable threshold. As a result, the state process becomes the so-called a \emph{refracted process}, whose drift differs depending on whether it is above or below the threshold.

The study of the refracted process and the threshold strategy in stochastic control  
became active thanks to the seminal paper \cite{KypLoe2010}, where they developed the fluctuation theory of refracted \lev processes and derived various quantities of interest
 (e.g.\ first-passage times and resolvet measures) written in terms of the scale function. These  results provided a new and efficient approach in solving stochastic control problems under the absolutely-continuous condition. Their results were first applied in the absolutely-continuous version of the optimal dividend problem in \cite{KypLoePer2012}, followed by several papers on related stochastic control problems, e.g., \cite{CzaPerYam2018, HerPerYam2016,  PerYam2017_1}.
  All these papers focused on the spectrally negative or positive \lev case where the scale function is readily available to be used. 
Indeed, using the scale function, various quantities of interest related to first passage times and resolvent/occupation measures can be explicitly written; see \cite{boxma2017reinsurance, li2018weighted,  li2019number, 
 lkabous2017parisian, renaud2014time,wang2023refracted, yang2020parisian}.
There exist results on refracted processes driven by processes other than spectrally one-sided \lev processes such as \cite{cheung2016joint,renaud2021stochastic,zhang2014perturbed, zhou2017distribution}. In these papers, obtained identities are more involved, without the use of the scale function and do not consider optimimization problems.

However, existing results on stochastic control for a general \lev process with two-sided jumps are limited, especially, for the formulation under the absolutely-continuous condition. To the best of our knowledge, Noba \cite{NOBA2023174} is the only existing paper that successfully showed the optimality of refraction strategies for a general \lev case in a version of the problem they considered.
 The problem is more challenging than the spectrally one-sided case because the scale function approach is no longer available. On the other hand, the spectrally one-sided assumption -- although convenient for analytical tractability -- is often not suitable in real applications. For financial applications, asset prices are empirically known to jump both upward and downward \cite{carr2002fine}.  In inventory applications,  the dynamics of water levels in dams/weirs, for example, are suitably modeled in terms of processes with jumps in both directions due to  possible surges in consumption and rainfall.



In this paper, we revisit a classical problem in stochastic control, where the controller can push the state process in the negative direction. Given a state process following $X = (X_t)_{t \geq 0}$ in the absence of control,  a control process $L= (L_t)_{t \geq 0}$ models the cumulative reduction so that the controlled process becomes $X-L$. The objective is to derive an optimal control $L$ so that the expected value of the running cost $\int_0^\infty e^{-qt}f(X_t-L_t) \diff t$ and controlling cost $\int_0^\infty e^{-qt} \diff L_t$ is minimized for a discount factor $q > 0$. The absolutely-continuous version requires 
additionally that the control process admit the form $L_t = \int_0^t l_s \diff s$, $t \geq 0$, for a density $l$ taking values on $[0, \alpha]$ uniformly in time. We allow $X$ to be a \lev process with two-sided jumps.

This paper  generalizes \cite{HerPerYam2016} and studies a problem related to    \cite{noba2022stochastic} and \cite{NobYam2022}. 
  In \cite{HerPerYam2016}, they studied the case when $X$ is a spectrally negative \lev process and  solved the problem directly using the scale function, with which the candidate value function can be written explicitly.
 Again, this approach is not available when $X$ admits two-sided jumps and thus a completely different approach is required. Noba and Yamazaki \cite{NobYam2022} studied the case for a general \lev process $X$ for the classical model without the absolutely-continuous condition. Our paper is also related to a recent paper \cite{noba2022stochastic} for a general \lev process $X$ under a different condition where control opportunities arrive only at Poisson arrival times.  We are also motivated by \cite{NOBA2023174}, which showed the optimality of a refraction strategy in an optimal dividend problem,
  but our problem is distinct and more challenging as we deal with a running cost driven by a general convex cost function $f$.
 

As in the case of \cite{HerPerYam2016,  noba2022stochastic,NobYam2022}, we derive a concise expression of the optimal threshold $b^*$ (see Section \ref{subsection_candidate_barrier}) and verify rigorously the optimality of refracting the state process at the threshold. The key result for this is the probabilistic expression of the slope of the candidate value function as in \eqref{der_v}, with which the verification of optimality can be carried out in a straightforward fashion.

 Our approach to be used in this paper is motivated by the pathwise approach of  \cite{noba2022stochastic,NobYam2022} where they developed analogous expressions as  \eqref{der_v}.
  However, our problem is significantly more difficult than the cases of \cite{noba2022stochastic,NobYam2022}. This is due to the difference between the reflected process and the refracted process. When the reflected paths of $X$ when started at two different points are compared, these eventually coincide at a finite time and follow the same paths afterwards. On the other hand, refracted paths do not have this property. Due to this difference, the proof of the key result (Theorem \ref{density_v_b_star}) requires an original approach.




The rest of the paper is organized as follows: In Section \ref{SubSec301}, we formulate the considered problem, and, in Section \ref{SubSec202}, we review the refracted Levy process. Section \ref{section_main_results} presents the main optimality result. In Section \ref{sec_proof_density_v_b_star}, we provide the proof of Theorem \ref{density_v_b_star}.


\section{The problem}\label{SubSec301}

Let $X= (X_t)_{t \geq 0}$ be a one-dimensional L\'evy process, defined on a probability space $(\Omega , \cF , \bP)$, modeling the state dynamics in the absence of control. For all $x\in\bR$, we write $\bP_x$ for the law of $X$ when it starts at $x$ and write $\E_x$ for its expectation. In particular, when $x=0$, we drop the subscript.
Let $\Psi$ be the characteristic exponent of $X$, which satisfies
\begin{align}
e^{-t\Psi(\lambda)}=\bE \sbra{e^{i\lambda X_t}},  \quad \lambda \in \bR, \quad t\geq 0. 
\end{align}  
It is known that the characteristic exponent $\Psi$ has the form  
\begin{align}
\Psi (\lambda) = -i\gamma\lambda +\frac{1}{2}\sigma^2 \lambda^2 
+\int_{\bR\backslash\{0\}} (1-e^{i\lambda z}+i\lambda z1_{\{\absol{z}<1\}}) \Pi(\diff z) , \quad\lambda\in\bR, 
\end{align}
for some $\gamma\in\bR$, a diffusion coefficient $\sigma\geq 0$, and a L\'evy measure $\Pi $ on $\bR \backslash \{0\}$ satisfying 
\begin{align}
\int_{\bR\backslash \{ 0\}}(1\land z^2) \Pi (\diff z). 
\end{align}
In addition, {it is known} that the process $X$ has bounded variation paths if and only if $\sigma= 0$ and the L\'evy measure $\Pi$ satisfies $\int_{(-1, 1)\backslash\{0\}} \absol{z}\Pi(\diff z)<\infty$. 
When this holds, we denote
\begin{align}
\Psi(\lambda)= -i\delta\lambda+\int_{\bR\backslash\{0\}}(1-e^{i\lambda z})\Pi (\diff z),  \label{78}
\end{align}
where
\begin{align}
\delta := \gamma-\int_{(-1, 1)\backslash\{0\}}z \Pi(\diff z), \label{def_delta}
\end{align}
is the drift of $X$.
As is commonly assumed in the literature, we impose the following to guarantee that the value function is well-defined and finite.
\begin{Ass}\label{Ass301}
There exists $\bar{\theta}>0$ such that $\int_{\bR\backslash (-1, 1)} e^{\bar{\theta}|z|}\Pi (\diff z)<\infty$. 
\end{Ass}

We denote by $\bF=(\cF_t )_{t\geq 0}$ the natural filtration generated by $X$.  
A strategy is given by an $\mathbb{F}$-adapted and progressively measurable process $\pi=(l_t^\pi)_{t \geq 0}$, satisfying
\begin{align}
l_t^\pi \in [0, \alpha], \quad t\geq 0 \quad \textrm{a.s.} \label{l_bound}
\end{align} 
for a deterministic constant maximum rate $\alpha > 0$.
A strategy $\pi$ pushes downward the state process $X$: the resulting controlled process becomes
\begin{align}
U^\pi_t :=X_t -\int_0^t l^\pi_s \diff s, \quad t\geq 0. \label{6}
\end{align}
Let $\cA$ be the set of strategies satisfying the above conditions.

The goal is to derive an optimal strategy that minimizes the sum of the running and controlling costs. We fix a discount rate $q>0$ and a unit cost of controlling $\beta \in\bR$ (reward if it is negative). 
The net present value (NPV) of the expected total costs under the strategy 
$\pi$
is
\begin{align}
v_\pi (x) := \bE_x \sbra{\int_0^\infty e^{-qt} \rbra{f(U^\pi_t) + \beta l_t^\pi} \diff t}, \quad x \in \R,
\end{align}
where the cost function $f$ is assumed to satisfy the following throughout the paper.

\begin{Ass} \label{assump_f}
The running cost function $f:\bR \to \bR$ is given by a continuously-differentiable and convex function, which is of polynomial growth, i.e., 
\begin{align}
|f(x)| < k_1 + k_2 {|x|}^N, 
\end{align}
for some $k_1, k_2>0$ and $N\in\bN$. 
\end{Ass}

The problem is to compute
 the value function
\begin{align}
v (x)=\inf_{\pi \in\cA} v_\pi(x) ,\quad x\in\bR, 
\end{align} 
and find an optimal strategy $\pi^\ast \in \cA$ satisfying 
\begin{align}
v_{\pi^\ast} (x)=v (x),
\quad x\in\bR, \label{5}
\end{align} 
if such a strategy exists.


\begin{remark} \label{remark_X_alpha} 
If a strategy is to activate $l^\pi$ at  the maximal rate $\alpha$ at all times, then the controlled process follows a
 drift-changed \lev process 
$X^{(\alpha)}=(X^{(\alpha)}_t)_{t\geq 0}$ with 
\begin{align}\label{X_alpha}
X^{(\alpha)}_t := X_t -\alpha t, \quad t\geq 0. 
\end{align}
Then, for any admissible strategy $\pi \in \mathcal{A}$ (satisfying \eqref{l_bound}), we have $X^{(\alpha)}_t \leq U_t^\pi \leq  X_t$ uniformly in $t \geq 0$ a.s.
\end{remark}

Under Assumption \ref{Ass301}, we have the following lemma.  The proof is omitted because it follows from
the proof of \cite[Lemma 11]{Yam2017}.  
\begin{lemma} \label{Lem302}
For $x\in\bR$ and any function $g:\bR \to \bR$ of polynomial growth, 
\begin{align}
\bE_x \sbra{\int_0^\infty e^{-qt} |g(X_t)| \diff t}<\infty, \quad \text{and}\quad
\bE_x \sbra{\int_0^\infty e^{-qt} |g(X^{(\alpha)}_t)| \diff t}<\infty. \label{29}
\end{align} 
In addition, the maps $x\mapsto \bE_x \sbra{\int_0^\infty e^{-qt} |g(X_t)| \diff t}$ and $x\mapsto\bE_x \sbra{\int_0^\infty e^{-qt} |g(X^{(\alpha)}_t)| \diff t}$ are {of} polynomial growth.

{Thus, by Remark \ref{remark_X_alpha} and the convexity of $f$, 
$\bE_x \sbra{\int_0^\infty e^{-qt} |f(U^\pi_t) | \diff t  } < \infty$ for all $\pi \in \mathcal{A}$.}
\end{lemma}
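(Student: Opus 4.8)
The plan is to reduce the entire statement to moment bounds for the two reference processes $X$ and $X^{(\alpha)}$, the only substantive ingredient being that Assumption \ref{Ass301} forces exponential integrability. Recall that $\int_{\bR\setminus(-1,1)}e^{\bar{\theta}|z|}\Pi(\diff z)<\infty$ is the classical criterion of L\'evy process theory under which $\bE\sbra{e^{\theta X_t}}=e^{t\psi(\theta)}<\infty$ for every $t\ge 0$ and $\theta\in[-\bar{\theta},\bar{\theta}]$, where the Laplace exponent $\psi$ is finite and convex, hence continuous, on $(-\bar{\theta},\bar{\theta})$, with $\psi(0)=0$. By this continuity one may fix $\theta_0\in(0,\bar{\theta})$ such that $c:=\max\{\psi(\theta_0),\psi(-\theta_0)\}<q$. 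Combining this with the elementary bound $|y|^N\le C_{N,\theta_0}\rbra{e^{\theta_0 y}+e^{-\theta_0 y}}$ gives $\bE\sbra{|X_t|^N}\le 2C_{N,\theta_0}e^{ct}$ for all $t\ge0$; applying the same reasoning to $X^{(\alpha)}_t=X_t-\alpha t$, whose Laplace exponent $\theta\mapsto\psi(\theta)-\alpha\theta$ again vanishes and is continuous at $0$, yields $\bE\sbra{|X^{(\alpha)}_t|^N}\le 2C'_{N,\theta_0}e^{c't}$ for some $c'<q$.

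Next I would establish \eqref{29} and the polynomial growth of the two maps simultaneously. By spatial homogeneity, the law of $X_t$ under $\bP_x$ is that of $x+X_t$ under $\bP$, so for any $g$ with $|g(y)|\le k_1+k_2|y|^N$,
\begin{align}
\bE_x\sbra{\int_0^\infty e^{-qt}|g(X_t)|\diff t}
&\le \int_0^\infty e^{-qt}\rbra{k_1+k_2 2^{N-1}\rbra{|x|^N+\bE\sbra{|X_t|^N}}}\diff t
\le A+B|x|^N
\end{align}
for finite constants $A,B$, where finiteness of $\int_0^\infty e^{-qt}\bE\sbra{|X_t|^N}\diff t$ is exactly what $c<q$ provides. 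This establishes both finiteness in \eqref{29} and polynomial growth in $x$ at once; replacing $X$ by $X^{(\alpha)}$ and $c$ by $c'$ gives the corresponding statement for $X^{(\alpha)}$.

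Finally, to transfer the estimate to $f(U^\pi_t)$ uniformly over $\pi\in\cA$, I would invoke Remark \ref{remark_X_alpha}, which gives $X^{(\alpha)}_t\le U^\pi_t\le X_t$. Convexity of $f$ then yields $f(U^\pi_t)\le\max\{f(X^{(\alpha)}_t),f(X_t)\}$, while the tangent-line inequality at the origin gives $f(U^\pi_t)\ge f(0)+f'(0)U^\pi_t\ge f(0)-|f'(0)|\rbra{|X_t|+|X^{(\alpha)}_t|}$; hence
\begin{align}
|f(U^\pi_t)|\le |f(X_t)|+|f(X^{(\alpha)}_t)|+|f(0)|+|f'(0)|\rbra{|X_t|+|X^{(\alpha)}_t|}.
\end{align}
Applying the first part to $g=f$ (of polynomial growth by Assumption \ref{assump_f}) and to $g(y)=y$, and using subadditivity of the expectation, gives $\bE_x\sbra{\int_0^\infty e^{-qt}|f(U^\pi_t)|\diff t}<\infty$ with a bound independent of $\pi$.

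The argument is essentially bookkeeping and I do not expect a genuine obstacle; the one point needing care is the passage from Assumption \ref{Ass301} to the exponential-moment identity $\bE\sbra{e^{\theta X_t}}=e^{t\psi(\theta)}$ together with the continuity of $\psi$ near $0$, since this is what pins down the constants $c,c'<q$ that make all the discounted integrals converge. If one wishes to bypass the Laplace exponent, one may instead use the standard fact that $\bE\sbra{|X_t|^N}$ is dominated by a polynomial in $t$ whenever $X$ has a finite $N$-th moment --- which makes the integrals converge for every $q>0$ --- and I would adopt whichever route aligns best with the source already cited in the paper (\cite{Yam2017}).
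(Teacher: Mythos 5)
Your proof is correct and follows essentially the same route the paper delegates to the cited reference: exponential moments of $X$ (and $X^{(\alpha)}$) under Assumption \ref{Ass301} with a small $\theta_0$ so that the Laplace exponent stays below $q$, spatial homogeneity for the polynomial growth in $x$, and then the sandwich $X^{(\alpha)}_t \leq U^\pi_t \leq X_t$ of Remark \ref{remark_X_alpha} combined with convexity of $f$. Your tangent-line lower bound $f(y)\geq f(0)+f'(0)y$ is a correct way of making precise the step the paper states only as ``by Remark \ref{remark_X_alpha} and the convexity of $f$.''
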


 It is also clear that $\bE_x \sbra{\int_0^\infty e^{-qt}  l_t^\pi \diff t} \leq \alpha/q < \infty$ for all $x \in \R$.  Therefore, we have the following. 
\begin{lemma}
For all $\pi \in \mathcal{A}$, we have $-\infty < v_\pi (x) < \infty$ for $x \in \R$.
\end{lemma}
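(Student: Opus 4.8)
The claim is that $-\infty < v_\pi(x) < \infty$ for all $\pi \in \mathcal{A}$ and $x \in \R$. The plan is to decompose the integrand of $v_\pi(x)$ into its running-cost part and its controlling-cost part and bound each separately. Writing
\begin{align}
v_\pi(x) = \bE_x\sbra{\int_0^\infty e^{-qt} f(U^\pi_t)\,\diff t} + \beta\, \bE_x\sbra{\int_0^\infty e^{-qt} l^\pi_t\,\diff t},
\end{align}
it suffices to show that both terms on the right are finite (in absolute value). The second term is the easy one: since $l^\pi_t \in [0,\alpha]$ uniformly in $t$ by \eqref{l_bound}, we have $0 \leq \bE_x[\int_0^\infty e^{-qt} l^\pi_t\,\diff t] \leq \alpha/q < \infty$, as already noted in the text just before the statement, so $|\beta|\cdot \alpha/q$ bounds the absolute value of the controlling-cost term.

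For the first term, the point is to invoke the last sentence of Lemma \ref{Lem302}, which asserts precisely that $\bE_x[\int_0^\infty e^{-qt} |f(U^\pi_t)|\,\diff t] < \infty$ for all $\pi \in \mathcal{A}$; this in turn rests on Remark \ref{remark_X_alpha} (the sandwich $X^{(\alpha)}_t \leq U^\pi_t \leq X_t$), the convexity of $f$ (so that $f(U^\pi_t) \leq \max\{f(X_t), f(X^{(\alpha)}_t)\} \leq |f(X_t)| + |f(X^{(\alpha)}_t)|$ and a lower bound follows from the supporting-line inequality at any fixed point), and the polynomial growth of $f$ combined with the finiteness in \eqref{29} of Lemma \ref{Lem302} applied to the polynomial-growth functions $x \mapsto f(x)$ evaluated along $X$ and along $X^{(\alpha)}$. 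Hence $|\bE_x[\int_0^\infty e^{-qt} f(U^\pi_t)\,\diff t}] \leq \bE_x[\int_0^\infty e^{-qt} |f(U^\pi_t)|\,\diff t] < \infty$.

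Combining the two bounds via the triangle inequality gives $|v_\pi(x)| \leq \bE_x[\int_0^\infty e^{-qt}|f(U^\pi_t)|\,\diff t] + |\beta|\,\alpha/q < \infty$, which is the assertion. There is essentially no obstacle here: the only mild subtlety is making sure the decomposition into two integrals is legitimate, i.e.\ that we are not subtracting $\infty - \infty$; but this is automatic once each piece is shown to be absolutely integrable, which is exactly what Lemma \ref{Lem302} and the uniform bound on $l^\pi$ provide. The lemma is really just bookkeeping that records the consequence of Lemma \ref{Lem302} for the objective functional $v_\pi$.
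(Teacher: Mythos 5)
Your proof is correct and follows essentially the same route as the paper: the control term is bounded by $|\beta|\alpha/q$ using \eqref{l_bound}, and the running-cost term is finite by the last assertion of Lemma \ref{Lem302}, which itself rests on the sandwich $X^{(\alpha)}_t \leq U^\pi_t \leq X_t$ of Remark \ref{remark_X_alpha}, the convexity of $f$, and the polynomial growth bound. The paper treats this as an immediate consequence and omits the proof, so your write-up simply makes explicit the bookkeeping the authors had in mind.
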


%

\section{Refracted L\'evy processes}\label{SubSec202}

In this paper, we prove that a refraction strategy with a suitable threshold is optimal.
In this section, we briefly review  the refraction strategy. Its definition in our setting requires technical details because we are considering a general \lev process whereas most existing results in the literature focus on the spectrally one-sided case (with an additional condition in the bounded variation case). Here, we follow the definition given in  \cite[Section 3.2]{NOBA2023174}, which depends on 
the following classifications.
%
\begin{description}
\item[Case $1$:] $X$ has unbounded variation paths or has bounded variation paths with $\delta \not \in [0, \alpha]$, 
\item[Case $2$:] $X$ has bounded variation paths with $\delta \in [0 , \alpha ]$ {(complement of Case $1$)}.
\end{description}
Here we recall that $\delta$ as in  \eqref{def_delta} is the drift of $X$ if it is of bounded variation whereas $\alpha$ is the upper bound of $l^\pi$ (see \eqref{l_bound}). These two cases need to be handled separately because, in Case 2, it is possible to control the drift to be nonpositive at all times (by choosing $l^\pi \geq \delta$) whereas, in Case 1, this is not possible.


Fix $b \in \bR$. We define the refraction strategy $\pi^b := (l^b_t)_{t \geq 0} \equiv (l^{\pi^b}_t)_{t \geq 0}$ with threshold $b$ to be such that it pushes downward the process at rate $\alpha$ whenever the process is above $b$, and at rate $\delta$ when the process is exactly at $b$ in Case 2. More precisely, when the controlled state process is at $y$, the rate $l^b$ is selected to be 
\begin{align}
h^b(y):=
\begin{cases}
\alpha1_{(b, \infty)}(y) \quad &\text{ in Case $1$}, \\
\alpha1_{(b, \infty)}(y)+\delta 1_{\{b\}}(y) \quad &\text{ in Case $2$}.
\end{cases}
\end{align}
The resulting controlled process $U^b$ can be written as a solution to  the stochastic differential equation (SDE)
\begin{align}
U_t^b = X_t - \int_0^t l^b_s \diff s, \quad t \geq 0,  \label{4}
\end{align}
where 
\begin{align}
l^b_t := h^b (U^b_t), \quad t\geq 0. 
\end{align}
%
%
%
%
It is clear that the refraction strategy  is  admissible. For the rest of the paper, we write its NPV function as $v_b(\cdot) := v_{\pi^b} (\cdot)$.

A majority of the existing results assume $\delta > \alpha$ for the case of bounded variation and thus focus on Case 1. In this case, on $(b, \infty)$, the process $U^b$ follows the refracted process of Kyprianou and Loeffen  \cite{KypLoe2010}. On the other hand, this paper considers also Case 2, where $b$ can be seen as a barrier (rather than a threshold). However, different from the classical reflected process with an upper barrier, the process $U^b$ can exceed $b$ if it jumps over $b$. 

For the rest of the paper, we {impose} the following assumption for $X$; this is not restrictive in view of Remark \ref{remark_assumption_SDE}. 
\begin{Ass}\label{Ass202a}
The SDE \eqref{4} has a unique strong solution $U^b$. 
\end{Ass}
\begin{remark} \label{remark_assumption_SDE}
From existing studies and \cite[Section 2.2]{NOBA2023174}, 
Assumption \ref{Ass202a} is guaranteed to hold if one or more of the following holds:
\begin{enumerate}
\item The process $X$ has bounded variation paths.
\item The process $X$ has unbounded variation paths and the L\'evy measure $\Pi$ satisfies $\Pi(-\infty, 0)<\infty$ or $\Pi(0, \infty)<\infty$. 
\item The process $X$ has a non-trivial Gaussian component (i.e.\ $\sigma > 0$).
\end{enumerate}
\end{remark}



%

%
%
%

\section{The main result} \label{section_main_results}

\subsection{Candidate barrier} \label{subsection_candidate_barrier}
Our candidate optimal threshold is succinctly defined by
\begin{align}\label{opt_thres}
b^\ast := \inf \{b\in \bR: \rho(b)-\beta \geq  0\},
\end{align}
where 
\begin{align}\label{fun_rho}
\rho(b):= \bE_b \sbra{\int_0^\infty e^{-qt} f^\prime (U^b_t) \diff t}=\bE \sbra{ \int_0^\infty e^{-qt}f^\prime (U^0_t+b ) \diff t }, \quad b \in \R. 
\end{align}
Here, it is understood that $b^*=-\infty$ if $\rho(b) - \beta \geq 0$ for all $b \in \mathbb{R}$ and  $b^*=+\infty$ if $\rho(b) - \beta < 0$ for all $b \in \mathbb{R}$.

Interestingly, the optimal barrier/threshold is written in the same way in the classical case \cite{NobYam2022} and also in a different version under the Poisson observation condition  \cite{noba2022stochastic}, with $U^b$ replaced by analogous reflected processes in their respective problems. In the absolutely-continuous model,  the same expression was obtained in the spectrally negative case in \cite{HerPerYam2016}.



We first  summarize, in the following remark, the properties of the function $\rho$, with which $b^*$ is defined. 
\begin{remark}\label{30}
\begin{enumerate}
\item By the convexity and the polynomial growth condition of $f$, its derivative $f^\prime$ is of polynomial growth because
\begin{align}
|f^\prime (x)| \leq |f(x+1)-f(x)| \lor |f(x)-f(x-1)|,\quad x\in\bR. \label{36}
\end{align}
Thus, the function $\rho$ is well-defined and finite by Lemma \ref{Lem302}. 

\item By the convexity of $f$, the function $\rho$ is non-decreasing.

\item By 
the dominated convergence theorem and Assumption \ref{assump_f}, the function $\rho$  is continuous on $\bR$. 

\item By monotone convergence, we have $\rho(\infty):=\lim_{b \to +\infty} \rho(b) = f'(\infty)/q \in (-\infty, \infty]$ and  $\rho(-\infty):=\lim_{b \to -\infty} \rho(b) = f'(-\infty)/q \in [-\infty, \infty)$ where $f'(\infty) := \lim_{b \to \infty} f'(b)$ and $f'(-\infty) := \lim_{b \to -\infty} f'(b)$ exist by the convexity of $f$.
\end{enumerate}
\end{remark}


By the above remark, the following is immediate.

\begin{lemma} \label{lemma_about_b_star}
\begin{enumerate}

\item {We have $b^*\in (-\infty, \infty)$ if and only if $f'(-\infty) < q\beta$ and $f'(x)\geq q\beta$ for some large $x$. Otherwise, $b^*=-\infty$ if and only if $f'(-\infty)\geq q\beta$, and $b^*=\infty$ if and only if $f'(x) <  q\beta$ for all $x \in \R$.}
\item When $b^* \in (-\infty, \infty)$, we have $\rho(b^\ast)=\beta$.
\end{enumerate}
\end{lemma}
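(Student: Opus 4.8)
The plan is to read everything off the three soft properties of $\rho$ recorded in Remark~\ref{30}: $\rho$ is finite-valued and non-decreasing, it is continuous on $\bR$, and it has limits $\rho(-\infty)=f'(-\infty)/q$ and $\rho(\infty)=f'(\infty)/q$. Consider the superlevel set $S:=\{b\in\bR:\rho(b)\geq\beta\}$. Monotonicity makes $S$ an up-set and continuity makes it closed, so $S$ is exactly one of $\emptyset$, a closed half-line $[b^\ast,\infty)$, or all of $\bR$; in each case $b^\ast=\inf S$, consistently with the definition~\eqref{opt_thres} (with $b^\ast=+\infty$ in the first alternative and $b^\ast=-\infty$ in the last). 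Both assertions then reduce to deciding which of the three alternatives occurs and, in the middle one, locating $b^\ast$.

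For part (i) I would argue the two extreme cases and take the complement. First, $b^\ast=-\infty\iff S=\bR\iff\rho(b)\geq\beta$ for every $b\iff\inf_b\rho(b)=\rho(-\infty)\geq\beta\iff f'(-\infty)\geq q\beta$; the only implication needing a word is ``$\Leftarrow$'', where $f'(-\infty)\geq q\beta$ forces $f'\geq q\beta$ pointwise by convexity of $f$, whence $\rho(b)=\bE[\int_0^\infty e^{-qt}f'(U^0_t+b)\,\diff t]\geq\beta$ for all $b$. Second, $b^\ast=+\infty\iff S=\emptyset\iff\rho(b)<\beta$ for every $b$: if $f'(x)<q\beta$ for all $x$, then $f'(U^0_t+b)-q\beta<0$ for all $t\geq 0$ and all $b$, and since this integrand is also integrable by Lemma~\ref{Lem302} and Remark~\ref{30}(i), we get $\rho(b)-\beta=\bE[\int_0^\infty e^{-qt}(f'(U^0_t+b)-q\beta)\,\diff t]<0$; conversely, if $f'(x_0)\geq q\beta$ for some $x_0$, then $\rho(\infty)=f'(\infty)/q\geq f'(x_0)/q\geq\beta$, and since $\rho$ is non-decreasing with this limit, $\rho(b)\geq\beta$ for all large $b$, i.e.\ $S\neq\emptyset$. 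The claim about $b^\ast\in(-\infty,\infty)$ is then precisely the negation of the two dichotomies just established (using that, by monotonicity of $f'$, ``$f'(x)\geq q\beta$ for some $x$'' is the same as ``for some large $x$'').

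For part (ii), suppose $b^\ast\in(-\infty,\infty)$, so $S=[b^\ast,\infty)$ is non-empty and proper. Being closed, $S$ contains its infimum, so $\rho(b^\ast)\geq\beta$. On the other hand every $b<b^\ast$ lies outside $S$, so $\rho(b)<\beta$ for all such $b$, and letting $b\uparrow b^\ast$ and invoking continuity of $\rho$ gives $\rho(b^\ast)\leq\beta$. Combining the two inequalities yields $\rho(b^\ast)=\beta$.

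Since the whole argument is a soft manipulation of $\rho$, I do not expect a serious obstacle. The one point that deserves care is the converse implication in the $b^\ast=+\infty$ characterisation at the borderline $f'(\infty)=q\beta$: there the limit relation $\rho(\infty)=\beta$ alone does not decide whether the value $\beta$ is attained along $b$, and one has to inspect the behaviour of $U^0$ more closely (for instance, whether $\inf_{t\geq 0}U^0_t$ can be made arbitrarily negative) to conclude. This is the only genuinely non-routine place, and where the bookkeeping is most delicate.
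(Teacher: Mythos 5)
Your argument is the same soft argument the paper has in mind --- the paper offers no proof at all, asserting the lemma is ``immediate'' from Remark \ref{30} --- and part (ii), the $b^\ast=-\infty$ dichotomy, and the ``if'' half of the $b^\ast=+\infty$ dichotomy are handled correctly: monotonicity and continuity of $\rho$, the limits $\rho(\pm\infty)=f'(\pm\infty)/q$, the pointwise bound $f'\geq q\beta$ obtained from convexity when $f'(-\infty)\geq q\beta$, and the strict pathwise bound when $f'<q\beta$ everywhere do exactly what you say.

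The one step that does not go through is the one you flag yourself: from $f'(x_0)\geq q\beta$ you infer $\rho(\infty)\geq\beta$ and then conclude that ``$\rho$ is non-decreasing with this limit, so $\rho(b)\geq\beta$ for all large $b$''. A non-decreasing function with limit exactly $\beta$ need not attain the value $\beta$, so this inference is only valid when $f'(\infty)>q\beta$. Moreover the gap is not merely cosmetic: take, say, $X$ with a Gaussian component and $f'\leq q\beta$ with $f'=q\beta$ on $[x_0,\infty)$ and $f'<q\beta$ below $x_0$. Since $U^0_t\leq X_t$, for every finite $b$ the refracted path spends, with positive probability, a positive Lebesgue amount of time below $x_0-b$, whence $\rho(b)<\beta$ strictly for every $b$, and by the convention following \eqref{opt_thres} this gives $b^\ast=+\infty$ even though $f'(x)\geq q\beta$ for all large $x$. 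So the ``only if'' direction of the $b^\ast=+\infty$ claim (equivalently the finiteness criterion in (i)) cannot be deduced from Remark \ref{30} alone and, at the borderline where $f'(\infty)=q\beta$ is attained by $f'$, is process-dependent; the trichotomy is unproblematic only if one reads ``$f'(x)\geq q\beta$ for some large $x$'' with strict inequality, or phrases it directly in terms of $\rho$ (namely $b^\ast=+\infty$ iff $\rho(b)<\beta$ for all $b\in\bR$). Away from this borderline case your proof is complete and coincides with the argument the paper leaves implicit.
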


\subsection{The smoothness and convexity of the function $v_{b^\ast}$}

We now provide a key theorem, with which the verification of optimality can be carried out in a straightforward way. The proof of this result is the most important contribution of this paper. Although similar results have also been observed in different versions of singular control problems  \cite{HerPerYam2016, noba2022stochastic,NobYam2022}, the absolutely-continuous version for a general \lev process is significantly more difficult. Because the proof is long and requires technical details of the path properties of  \lev processes, we defer the proof to Section \ref{sec_proof_density_v_b_star}.

\begin{theorem}\label{density_v_b_star}
The function $v_{b^\ast}$ 
{is convex and continuously differentiable and} 
admits a derivative written as
\begin{align}\label{der_v}
v_{b^\ast}^\prime (x) =\bE_x \sbra{\int_0^\infty e^{-qt} f^\prime(U^{b^\ast}_t)\diff t },\quad x\in\bR. 
\end{align}
In particular, when $b^*=+\infty$, the above holds with $U^{b^\ast} \equiv X$; when $b^*=-\infty$, the above holds with $U^{b^\ast} \equiv X^{(\alpha)}$.
\end{theorem}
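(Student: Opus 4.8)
The plan is to establish the representation \eqref{der_v} by a perturbation-in-the-starting-point argument, exploiting the fact that the refracted SDE \eqref{4} can be solved simultaneously for all starting points on the same probability space. First I would fix $b=b^*$ (treating the boundary cases $b^*=\pm\infty$ by the interpretations given in the statement, where the SDE degenerates to $X$ or $X^{(\alpha)}$ and the claim reduces to differentiating under the expectation using Lemma \ref{Lem302}) and construct a coupling of the solutions $(U^b(x))_{x\in\bR}$ started from every $x$, driven by one copy of $X$. The key structural input is monotonicity and Lipschitz-type control of the flow: for $x\le y$ one has $U^b_t(x)\le U^b_t(y)$ for all $t$, and moreover $0\le U^b_t(y)-U^b_t(x)\le y-x$ uniformly in $t$, since the drift adjustment $-\int_0^t h^b(U^b_s)\,\diff s$ is monotone in the state and the two solutions share the same jumps. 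This sandwiching, together with the polynomial growth of $f'$ (Remark \ref{30}(i)) and Lemma \ref{Lem302}, will justify dominated convergence for the difference quotients of $v_b$.

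Next I would write the difference quotient
\begin{align}
\frac{v_b(y)-v_b(x)}{y-x}=\bE\sbra{\int_0^\infty e^{-qt}\,\frac{f(U^b_t(y))-f(U^b_t(x))}{y-x}\,\diff t}+\beta\,\bE\sbra{\int_0^\infty e^{-qt}\,\frac{\int_0^t\!\big(h^b(U^b_s(y))-h^b(U^b_s(x))\big)\diff s}{y-x}\,\diff t},
\end{align}
and analyze the two terms as $y\downarrow x$. For the first term, the mean value theorem and the sandwich bound give an integrand dominated by $e^{-qt}\sup_{z\in[U^b_t(x),U^b_t(x)+1]}|f'(z)|$, which is integrable by Lemma \ref{Lem302}; its pointwise limit is $e^{-qt}f'(U^b_t(x))\,\partial_x U^b_t(x)$. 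The crucial simplification, which I expect to be the technical heart of the argument, is that the derivative of the flow, $D_t:=\partial_x U^b_t(x)$, satisfies $D_t\in\{0,1\}$ a.s.\ — it equals $1$ before the controlled process first hits the threshold regime in a "sticky" way and drops to $0$ afterward — so that the controlling-cost term contributes exactly the negative amount needed to cancel the part of $f'$ integrated against $D_t=0$. Concretely I would show that $\int_0^\infty e^{-qt}f'(U^b_t(x))(1-D_t)\,\diff t = -\beta\,\bE[\cdots]$ coincides with the limit of the second difference quotient, using that $1-D_t$ increases precisely when the extra downward push at rate $\alpha$ (resp.\ $\delta$ at the barrier) is engaged, and that at such times $f'(U^b_t)=f'(b^*)=q\beta$ by Lemma \ref{lemma_about_b_star}(ii) when $U^b_t$ sits at $b^*$, while on excursions strictly above $b^*$ the contributions from the two terms telescope. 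This is where the difference between reflected and refracted paths bites: unlike the reflected case, $U^b(x)$ and $U^b(y)$ need not merge in finite time, so I cannot argue via eventual coincidence of paths; instead I must track the (random) set of times where the flow derivative has already collapsed, and control it via first-passage and occupation-time estimates for the refracted process.

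Having obtained that the right-hand derivative of $v_b$ at every $x$ equals $\bE_x[\int_0^\infty e^{-qt}f'(U^b_t)\diff t]$, I would argue symmetrically for the left-hand derivative (taking $y\uparrow x$, using the same coupling), conclude the two agree, hence $v_b$ is differentiable with the stated formula. Continuity of $x\mapsto \bE_x[\int_0^\infty e^{-qt}f'(U^b_t)\diff t]$ follows from the sandwich bound and dominated convergence, giving $v_b\in C^1$. Finally, convexity: since $f'$ is non-decreasing and $x\mapsto U^b_t(x)$ is non-decreasing in $x$ for each fixed $t$ (by the comparison property of the SDE), the map $x\mapsto f'(U^b_t(x))$ is non-decreasing, so $v_{b^*}'$ is non-decreasing, i.e.\ $v_{b^*}$ is convex. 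The main obstacle, as indicated, is the rigorous identification of the flow derivative $D_t$ and the proof that the controlling-cost difference quotient converges to exactly $\bE_x[\int_0^\infty e^{-qt}f'(U^b_t)(1-D_t)\diff t]$; this requires a careful pathwise analysis of how the refraction mechanism responds to a small upward shift of the initial condition, and is the point where the two-sided-jump generality forces a genuinely new argument compared to \cite{NobYam2022,noba2022stochastic}.
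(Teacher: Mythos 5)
Your overall skeleton (shift/coupling of starting points, the sandwich $0\le U^{b}_t(y)-U^{b}_t(x)\le y-x$, splitting the difference quotient into a running-cost and a controlling-cost part, convexity of $v_{b^*}$ from monotonicity of the flow, and dominated convergence via Lemma \ref{Lem302}) matches the paper's strategy. But the step you yourself flag as the technical heart is where the proposal breaks down, in two ways. First, the assertion that the flow derivative $D_t=\partial_x U^b_t(x)$ exists and is $\{0,1\}$-valued, equal to $1$ until the process "first hits the threshold regime" and $0$ afterward, is unsubstantiated and does not reflect how the refracted flow behaves: by Lemma \ref{lemma_property_diff_process}, the gap $U^{[\varepsilon],b^*}_t-U^{b^*}_t$ decays \emph{continuously} (at rate at most $\alpha$), only during the possibly many time intervals on which the two paths straddle $b^*$, and in general it neither collapses at a single time nor fully collapses at all (the paths need not merge). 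Establishing existence of a pointwise flow derivative, let alone its $\{0,1\}$ structure, is precisely what one cannot do easily here, and the paper deliberately avoids it: it sandwiches the difference quotient between $\int_0^\infty e^{-qt}f'(U^{b^*}_t)\diff t$ and $\int_0^\infty e^{-qt}f'(U^{[\varepsilon],b^*}_t)\diff t$ plus controlled error terms, never identifying a derivative of the flow.

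Second, your proposed cancellation "at such times $f'(U^b_t)=f'(b^*)=q\beta$ by Lemma \ref{lemma_about_b_star}(ii)" misreads that lemma: it states $\rho(b^*)=\beta$, i.e. $\bE_{b^*}\sbra{\int_0^\infty e^{-qt}f'(U^{b^*}_t)\diff t}=\beta$, which is a statement about the expected discounted \emph{future} running of $f'$ under the refracted dynamics, not a pointwise identity $f'(b^*)=q\beta$ (which is false in general here). Consequently the cancellation between the controlling-cost term $\beta u_2$ and the "lost" running-cost contribution cannot be made locally in time as you suggest. The paper's mechanism is different: it discretizes the decrease of the gap into intervals $[S^{(k)}_{[\eta]},T^{(k)}_{[\eta]}]$ of length $\eta$, applies the strong Markov property at $T^{(k)}_{[\eta]}$ to convert the lost future contribution into $e^{-qT^{(k)}_{[\eta]}}\rbra{R^{(q)}(U^{b^*}_{T^{(k)}_{[\eta]}})-\beta}$ with $R^{(q)}(x)=\bE_x\sbra{\int_0^\infty e^{-qt}f'(U^{b^*}_t)\diff t}$, and then shows the residuals $\delta_1(\eta),\delta_2(\eta)$ vanish as $\eta\downarrow 0$ using monotonicity and continuity of $R^{(q)}$, the identity $R^{(q)}(b^*)=\beta$, and an a.e.-$t$ case analysis according to whether $U^{b^*}_t$ is above, below, or at $b^*$ (together with reverse Fatou and the integrability bounds from Lemma \ref{Lem302} and \cite[Exercise 7.1(ii)]{Kyp2014}). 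Without an argument of this type — or a genuine proof of your flow-derivative claim together with a corrected cancellation identity — the proposal does not close.
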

%
%

The following is a corollary of Theorem \ref{density_v_b_star}.

\begin{corollary} \label{cor_property_v_b}
We have
\begin{equation}\label{der_HJB}
v'_{b^*}(x)
\begin{cases}
\geq \beta &  \text{if $x> b^*$,}\\
= \beta &  \text{if $x= b^*$,}\\
\leq \beta &  \text{if $x< b^*$.}
\end{cases}
\end{equation}

\end{corollary}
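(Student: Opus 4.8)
The plan is to read off \eqref{der_HJB} directly from Theorem \ref{density_v_b_star} and Lemma \ref{lemma_about_b_star}, treating the finite and infinite values of $b^*$ separately. Assume first that $b^* \in (-\infty, \infty)$. Evaluating the derivative formula \eqref{der_v} at $x = b^*$ and comparing with the definition \eqref{fun_rho} of $\rho$ gives $v'_{b^*}(b^*) = \rho(b^*)$, and by Lemma \ref{lemma_about_b_star}(ii) the right-hand side equals $\beta$. Since Theorem \ref{density_v_b_star} asserts that $v_{b^*}$ is convex, its derivative $v'_{b^*}$ is non-decreasing; hence $v'_{b^*}(x) \ge v'_{b^*}(b^*) = \beta$ whenever $x > b^*$ and $v'_{b^*}(x) \le v'_{b^*}(b^*) = \beta$ whenever $x < b^*$, which is \eqref{der_HJB}.

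For the degenerate cases one replaces the identity $v'_{b^*}(b^*) = \rho(b^*)$ by a pointwise bound on $f'$. If $b^* = +\infty$, Lemma \ref{lemma_about_b_star}(i) gives $f'(y) < q\beta$ for every $y \in \bR$, while Theorem \ref{density_v_b_star} gives $v'_{b^*}(x) = \bE_x \sbra{\int_0^\infty e^{-qt} f'(X_t) \diff t}$ since in this case $U^{b^*} \equiv X$; bounding the integrand by $q\beta$ and using $\int_0^\infty e^{-qt}\diff t = 1/q$ yields $v'_{b^*}(x) \le \beta$ for all $x \in \bR$. As there is no $x$ with $x > b^* = +\infty$, this is exactly \eqref{der_HJB}. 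Symmetrically, if $b^* = -\infty$, Lemma \ref{lemma_about_b_star}(i) gives $f'(-\infty) \ge q\beta$, and since $f'$ is non-decreasing (convexity of $f$) we get $f'(y) \ge q\beta$ for every $y$; combining this with $v'_{b^*}(x) = \bE_x \sbra{\int_0^\infty e^{-qt} f'(X^{(\alpha)}_t) \diff t}$ gives $v'_{b^*}(x) \ge \beta$ for all $x$, again matching \eqref{der_HJB}.

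Given Theorem \ref{density_v_b_star}, this corollary carries no serious obstacle; the one point that demands attention is that $\rho(x)$, which is defined via the refracted process $U^x$ started from its own threshold $x$ (see \eqref{fun_rho}), differs from $v'_{b^*}(x)$, which is defined via $U^{b^*}$ started from $x$, as soon as $x \ne b^*$. Consequently the monotonicity of $\rho$ recorded in Remark \ref{30}(ii) does not by itself yield \eqref{der_HJB} away from $b^*$; it is the monotonicity of $v'_{b^*}$ — equivalently, the convexity of $v_{b^*}$ asserted in Theorem \ref{density_v_b_star} — that closes the argument.
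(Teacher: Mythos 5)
Your proposal is correct and follows essentially the same route as the paper: for $b^*\in(-\infty,\infty)$ you combine $v'_{b^*}(b^*)=\rho(b^*)=\beta$ (Theorem \ref{density_v_b_star} evaluated at $b^*$ plus Lemma \ref{lemma_about_b_star}(ii)) with the convexity of $v_{b^*}$, exactly as in the paper. The only difference is in the degenerate cases, where the paper computes $v'_{b^*}(\pm\infty)=f'(\pm\infty)/q$ by monotone convergence and then invokes monotonicity of $v'_{b^*}$, whereas you bound $f'$ pointwise by $q\beta$ via Lemma \ref{lemma_about_b_star}(i) and integrate directly under \eqref{der_v}; this is a harmless, equally valid shortcut.
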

\begin{proof}
When $b^*\in(-\infty,\infty)$, the convexity along with $v'_{b^*}(b^*)=\beta$ as in Lemma \ref{lemma_about_b_star}(ii)
shows  \eqref{der_HJB}.

%
For the case $b^*=\infty$, we have by monotone convergence and Lemma  \ref{lemma_about_b_star}(i),
\[
v_{\infty}'(\infty)=\lim_{x\to\infty}\E\left[\int_0^\infty e^{-qt}f'(x+X_t)\diff t\right]=\frac{f'(\infty)}{q}\leq \beta. 
\]
Similarly, for $b^*=-\infty$ 
\[
v_{-\infty}'(-\infty)=\lim_{x\to-\infty}\E\left[\int_0^\infty e^{-qt}f'(x+X_t^{(\alpha)}) \diff t\right]=\frac{f'(-\infty)}{q}\geq \beta. 
\]
In either case, by Theorem \ref{density_v_b_star}, we have that $v_{b^*}'$ satisfies \eqref{der_HJB}. 
\end{proof}

\subsection{Verification lemma}\label{Subsec_Aux_Verification}
We first state a verification lemma, which provides a sufficient condition for a strategy to be optimal.

We define two classes of functions and an operator similar to those defined in \cite[Section 2.1]{Nob2021}. 

First, let $C^{(1)}_{\text{poly}}$ be the set of continuously-differentiable functions $g:\bR\to\bR$, which are of  polynomial growth, i.e.
there exist $k_1, k_2, M >0$ such that  
$
|g(x)|<k_1 + k_2 {|x|}^M
$
for all $x\in\bR$. 

%
Second, let $C^{(2)}_{\text{poly}}$ be a subset of $C^{(1)}_{\text{poly}}$ such that the continuous derivative  $g^\prime$  admits, on $\mathbb{R}$, a continuous density function $g^{\prime\prime}: \bR\to \bR$ which 
satisfies $g^\prime (x)+\int_x^y g^{\prime\prime}(z) \diff z = g^\prime(y)$ for $x, y\in\bR$ with $x<y$. 

Let $\mathcal{L}$ be the operator acting on functions in $C^{(1)}_{\text{poly}}$ (resp. $C^{(2)}_{\text{poly}}$) when $X$ has bounded (resp. unbounded) variation paths defined by
\begin{align}
\mathcal{L} g(x)= \gamma g^\prime(x) +\frac{1}{2}\sigma^2 g^{\prime\prime}(x) 
+\int_{\R \backslash \{0\} } (g(x+z)-g(x)-g^\prime(x) z1_{\{|z|<1\}}) \Pi (\diff z) , \quad x\in \mathbb{R}.\label{38}
\end{align}
Here for the case of unbounded variation, we fix the density $g''$ satisfying the requirements given in the definition of $C^{(2)}_{\text{poly}}$.
%
By Assumption \ref{Ass301} and following the same arguments as in \cite[Remark 2.4]{Nob2021}, the operator $\mathcal{L}$ is well-defined in $C^{(1)}_{\text{poly}}$ (resp. $C^{(2)}_{\text{poly}}$) for bounded (resp.\ unbounded) variation cases.


{
The verification lemma, given below, follows similar to \cite{HerPerYam2016, NOBA2023174} and thus we omit the proof.
\begin{lemma}\label{verification}
	Let $w: \R \to \R$ belong to $C^{(1)}_{\text{poly}}$ when $X$ is of bounded variation and $C^{(2)}_{\text{poly}}$ otherwise and satisfies
	\begin{align}\label{HJB_eq}
		&(\mathcal{L} -q )w(x)+ \inf_{0\leq r\leq \alpha}r\left(\beta-w'(x)\right)+f(x)\geq 0, \quad x\in\bR. 
	\end{align}
	Then we have $w(x) \leq \inf_{\pi\in \mathcal{A}}v_\pi(x)$ for $x \in \R$. 
\end{lemma}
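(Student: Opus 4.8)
The plan is to proceed along the standard route for verification lemmas in this setting: take an arbitrary admissible strategy $\pi \in \mathcal{A}$ with controlled process $U^\pi$ as in \eqref{6}, apply an appropriate version of Itô's formula (or the Meyer--Itô/Dynkin formula for the $C^{(1)}_{\text{poly}}$ case of bounded variation, where only first-order smoothness is available because $U^\pi$ has bounded variation) to the process $(e^{-qt} w(U^\pi_t))_{t\geq 0}$, and then use the hypothesis \eqref{HJB_eq} to turn the resulting drift term into a lower bound. Concretely, applying Itô's formula gives, for $t\geq 0$,
\begin{align*}
e^{-qt} w(U^\pi_t) = w(x) + \int_0^t e^{-qs}\left( \mathcal{L}w(U^\pi_s) - q w(U^\pi_s) - l_s^\pi w'(U^\pi_s)\right) \diff s + (\text{local martingale}),
\end{align*}
where the $-l_s^\pi w'(U^\pi_s)$ term comes from the absolutely continuous drift correction $-\int_0^t l_s^\pi \diff s$ in $U^\pi$. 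Since $0 \leq l_s^\pi \leq \alpha$, the integrand satisfies $\mathcal{L}w(U^\pi_s) - q w(U^\pi_s) - l_s^\pi w'(U^\pi_s) + l_s^\pi \beta + f(U^\pi_s) \geq (\mathcal{L}-q)w(U^\pi_s) + \inf_{0\leq r\leq\alpha} r(\beta - w'(U^\pi_s)) + f(U^\pi_s) \geq 0$ by \eqref{HJB_eq}. Rearranging yields the pointwise pathwise inequality
\begin{align*}
e^{-qt} w(U^\pi_t) + \int_0^t e^{-qs}\left(f(U^\pi_s) + \beta l_s^\pi\right)\diff s \geq w(x) + (\text{local martingale}).
\end{align*}

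Next I would deal with the usual technical points: (i) justify that the local martingale is a true martingale (or at least handle it via a localizing sequence of stopping times $T_n \uparrow \infty$, taking expectations at $t\wedge T_n$ and then passing to the limit); (ii) control the boundary term $\E_x[e^{-q(t\wedge T_n)} w(U^\pi_{t\wedge T_n})]$ and show it vanishes as $t\to\infty$ (after $n\to\infty$). For (ii), since $w \in C^{(1)}_{\text{poly}}$ (or $C^{(2)}_{\text{poly}}$) it is of polynomial growth, and by Remark \ref{remark_X_alpha} we have $X^{(\alpha)}_t \leq U^\pi_t \leq X_t$, so $|w(U^\pi_t)| \leq k_1 + k_2(|X_t|^M + |X^{(\alpha)}_t|^M)$; then $e^{-qt}\E_x[|w(U^\pi_t)|] \to 0$ follows from the integrability afforded by Assumption \ref{Ass301} via Lemma \ref{Lem302} (the finiteness of $\E_x[\int_0^\infty e^{-qt}|g(X_t)|\diff t]$ forces $e^{-qt}\E_x[|g(X_t)|]\to 0$ along a subsequence, and monotonicity/Fubini-type arguments upgrade this). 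For (i), the local martingale part is a stochastic integral against the compensated jump measure plus a Brownian term; the polynomial growth of $w$, $w'$ together with Assumption \ref{Ass301} (exponential tails of $\Pi$) and Lemma \ref{Lem302} give the requisite $L^1$ or $L^2$ bounds to localize and take limits, with dominated/monotone convergence handling the integral term $\int_0^{t\wedge T_n}$. After taking $\E_x$, letting $n\to\infty$ and then $t\to\infty$, the left side converges to $v_\pi(x)$ and the boundary term drops out, giving $v_\pi(x) \geq w(x)$; taking the infimum over $\pi\in\mathcal{A}$ finishes the proof.

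The main obstacle is the bounded-variation case, where $w$ is only assumed to lie in $C^{(1)}_{\text{poly}}$ and the operator $\mathcal{L}$ uses the representation \eqref{78} with drift $\delta$ rather than \eqref{38} with $\gamma$ — one must invoke the correct change-of-variable formula for a bounded-variation semimartingale (no second-order term, jumps handled by summation against the jump measure, and the "$z 1_{\{|z|<1\}}$" compensator reorganized into the drift $\delta$), and verify that $\mathcal{L}$ as written in \eqref{38} indeed equals the generator expression produced by Itô's formula for this class; the paper has already asserted $\mathcal{L}$ is well-defined on $C^{(1)}_{\text{poly}}$ in the bounded-variation case following \cite[Remark 2.4]{Nob2021}, so I would cite that and the analogous computations in \cite{HerPerYam2016, NOBA2023174} rather than redo them. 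The remaining work — justifying integrability, the martingale property, and the vanishing of the terminal term — is routine given Assumption \ref{Ass301} and Lemma \ref{Lem302}, which is presumably why the authors omit the proof and defer to \cite{HerPerYam2016, NOBA2023174}.
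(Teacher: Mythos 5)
Your proposal is correct and follows exactly the route the paper intends: the paper omits the proof of Lemma \ref{verification}, deferring to \cite{HerPerYam2016, NOBA2023174}, whose verification arguments are precisely the It\^o/change-of-variables application to $e^{-qt}w(U^\pi_t)$, the use of \eqref{HJB_eq} on the drift term with $0\leq l^\pi_s\leq\alpha$, localization plus the integrability from Assumption \ref{Ass301} and Lemma \ref{Lem302}, and the vanishing of the discounted terminal term via polynomial growth and $X^{(\alpha)}_t\leq U^\pi_t\leq X_t$. The only minor point is that upgrading $e^{-qt}\E_x[|w(U^\pi_t)|]\to 0$ from a subsequence to the full limit is unnecessary: passing to the limit along the subsequence already yields $w(x)\leq v_\pi(x)$.
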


\subsection{Optimality}

In order to apply Lemma \ref{verification}, we need to confirm that our candidate value function $v_{b^*}$ satisfies the smoothness condition: $v_{b^*} \in C^{(1)}_{\text{poly}}$ (resp. $v_{b^*} \in C^{(2)}_{\text{poly}}$) in the bounded (resp.\ unbounded) variation cases. The case of bounded variation is already taken care of by Theorem \ref{cor_property_v_b} and Lemma \ref{Lem302}
 (see the proof of Lemma \ref{prop_v_prime}(1) for a complete proof).  In addition, it is known, as in \cite{HerPerYam2016}, that it is satisfied when $X$ is a spectrally negative \lev process whether or not it is of bounded or unbounded variation.
However, as the class of \lev processes is wide, it is difficult to confirm that $v_{b^*} \in C^{(2)}_{\text{poly}}$ holds in general when $X$ is of unbounded variation. We therefore assume the following and then provide a very general sufficient condition.

\begin{Ass}\label{Ass401a}
When the process $X$ is of unbounded variation paths and it is not a spectrally negative \lev process, we assume $v_{b^*}'$ as in \eqref{der_v} has a continuous density.
\end{Ass}

However, this assumption is satisfied in essentially all practical applications in view of the following. Its proof is deferred to Appendix \ref{app_lemma}.

\begin{lemma}\label{Lem402}
If the L\'evy measure satisfies $\Pi(0,\infty)<\infty$ or $\Pi(-\infty,0)<\infty$, Assumption \ref{Ass401a} is satisfied.
\end{lemma}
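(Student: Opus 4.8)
\textbf{Proof proposal for Lemma \ref{Lem402}.}

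The plan is to show that, under the one-sided finiteness condition on $\Pi$, the density of $v_{b^*}'$ promised by Theorem \ref{density_v_b_star} can be obtained by differentiating the resolvent-type expression \eqref{der_v} once more, and that this derivative is continuous. By Theorem \ref{density_v_b_star} we already know $v_{b^*}$ is convex and continuously differentiable with
$v_{b^*}'(x)=\E_x[\int_0^\infty e^{-qt} f'(U^{b^*}_t)\,\diff t]$, so what remains is purely a regularity statement about this expectation as a function of the starting point $x$. I would first reduce to the two degenerate cases $b^*=\pm\infty$ separately (where $U^{b^*}$ is the unperturbed \lev process $X$ or the drift-shifted process $X^{(\alpha)}$), and then treat the genuine refracted case $b^*\in\R$.

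In the degenerate cases, $v_{b^*}'(x)=\E[\int_0^\infty e^{-qt} f'(x+Y_t)\,\diff t]$ with $Y=X$ or $Y=X^{(\alpha)}$, i.e.\ $v_{b^*}'=q^{-1}U^{(q)}f'$ where $U^{(q)}$ is the $q$-resolvent of $Y$. The key input is that when $\Pi(0,\infty)<\infty$ or $\Pi(-\infty,0)<\infty$, the unbounded-variation process $Y$ has transition densities that are sufficiently regular — in fact one-sided jump-bounded \lev processes of unbounded variation possess $C^1$ (indeed smooth) resolvent densities, because the Gaussian component (forced by unbounded variation together with $\int_{(-1,1)}|z|\Pi(\diff z)=\infty$ being impossible when one side of $\Pi$ is finite — unbounded variation then forces $\sigma>0$) produces a smoothing effect. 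So I would argue: finiteness of $\Pi$ on one side plus unbounded variation implies $\sigma>0$; then the resolvent measure $U^{(q)}(x,\diff y)$ has a density $u^{(q)}(x,y)=u^{(q)}(y-x)$ that is continuous and in fact continuously differentiable, so that $v_{b^*}'(x)=q^{-1}\int_\R u^{(q)}(y-x) f'(y)\,\diff y$ is a convolution which, combined with the polynomial growth of $f'$ (Remark \ref{30}(i)) and Assumption \ref{Ass301} giving exponential integrability, can be differentiated under the integral sign, yielding a continuous density for $v_{b^*}'$.

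For the refracted case $b^*\in\R$ the same philosophy applies but one must work with the refracted process $U^{b^*}$ rather than a \lev process. Here I would invoke the structure already used in the proof of Theorem \ref{density_v_b_star}: decompose according to the first time $U^{b^*}$ reaches the region below $b^*$ and use the strong Markov property, or appeal to the resolvent density of the refracted process, which again inherits the $C^1$-smoothing from $\sigma>0$ on both sides of $b^*$ (the presence of a Gaussian component is unaffected by the piecewise-constant drift change). The continuity of $v_{b^*}'$ across the threshold $b^*$ itself is already part of Theorem \ref{density_v_b_star}; the new content is continuity of its \emph{density}, which away from $b^*$ follows from the smoothness of the refracted resolvent density on each side, and at $b^*$ requires matching the one-sided limits. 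I expect the main obstacle to be exactly this: establishing that the refracted process (not merely the driving \lev process) has a resolvent density of class $C^1$ up to and across $b^*$ under the one-sided condition on $\Pi$, and that the convexity from Theorem \ref{density_v_b_star} forces the two one-sided second derivatives to agree at $b^*$. The polynomial-growth bookkeeping for differentiating under the integral — using Lemma \ref{Lem302} and Assumption \ref{Ass301} — is routine and I would not belabour it.
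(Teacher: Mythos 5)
Your argument hinges on the claim that one-sided finiteness of the L\'evy measure together with unbounded variation forces $\sigma>0$, so that a Gaussian component smooths the resolvent. This is false: if $\Pi(0,\infty)<\infty$, unbounded variation can come entirely from infinite small-jump activity on the \emph{negative} side, e.g.\ $X$ equal to a spectrally negative $\alpha$-stable process with $\alpha\in(1,2)$ (so $\sigma=0$, $\int_{(-1,0)}|z|\,\Pi(\diff z)=\infty$) plus an independent compound Poisson process of positive jumps. Such processes satisfy the hypothesis of Lemma \ref{Lem402} with no Gaussian part, so the smoothing mechanism your whole proposal rests on is unavailable, and you offer no alternative route to the $C^1$ regularity of the resolvent density — neither for the degenerate cases $b^*=\pm\infty$ nor, a fortiori, for the refracted process, where you only gesture at a strong Markov decomposition without carrying it out. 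The continuity of the density across $b^*$ is likewise asserted rather than proved.

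The paper's proof takes a genuinely different route that does not need $\sigma>0$: assuming $\Pi(0,\infty)<\infty$, it writes $X=X^{\textrm{SN}}+\sum_{n\le N_t}J_n$ with $X^{\textrm{SN}}$ spectrally negative of unbounded variation and the positive jumps forming an independent compound Poisson process; it then applies the strong Markov property at the first positive-jump time to express $v_{b^*}'$ through resolvents of the refracted process driven by $X^{\textrm{SN}}$, for which the Kyprianou--Loeffen formula gives an explicit density $R^{(q)}(x,y)$ in terms of the scale functions $W^{(q)},\mathbb{W}^{(q)}$. Smoothness in $x$ then comes from $W^{(q)}(0)=0$ and $W^{(q)},\mathbb{W}^{(q)}\in C^1(\bR\setminus\{0\})$ in the unbounded variation case, and the only remaining work is the integrability/polynomial-growth bookkeeping for $\int |g(y)|\,\partial_1R^{(q)}(x,y)\,\diff y$ and for $y\mapsto\int_{(0,\infty)}H^{(b^*,q)}_{f'}(y+u)\,\Pi(\diff u)$, after which dominated convergence yields continuity of the density. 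To repair your proof you would need either to restrict to $\sigma>0$ (strictly weaker than the lemma) or to adopt a scale-function-type argument of this kind for the one-sided component.
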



We are now ready to state the optimality result. Below, we let $v_{b^*}''$ be the continuous density of $v_{b^*}'$ which exists by Assumption \ref{Ass401a}.

\begin{theorem}\label{Thm401}
	Under Assumptions  \ref{Ass301}, \ref{assump_f},  \ref{Ass202a},
	and \ref{Ass401a},
	the threshold strategy $\pi^{b^*}$ is optimal and the value function is given by $v(x)=v_{b^*}(x)=v_{\pi^*}(x)$ for all $x \in \R$.
\end{theorem}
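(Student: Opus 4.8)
\textbf{Proof proposal for Theorem \ref{Thm401}.}
The plan is to invoke the verification lemma (Lemma \ref{verification}) with the candidate $w = v_{b^*}$, so the whole argument reduces to checking that $v_{b^*}$ has the required regularity and satisfies the HJB inequality \eqref{HJB_eq}. First I would establish the regularity: by Theorem \ref{density_v_b_star}, $v_{b^*}$ is convex and continuously differentiable with derivative given by \eqref{der_v}, and since $f'$ is of polynomial growth (Remark \ref{30}(i)), Lemma \ref{Lem302} shows $v'_{b^*}$ is of polynomial growth; integrating once more and using the polynomial growth of $f$ itself gives $v_{b^*}\in C^{(1)}_{\text{poly}}$. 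In the bounded variation case this is all that is needed. In the unbounded variation case, Assumption \ref{Ass401a} (or the sufficient condition in Lemma \ref{Lem402}, or the spectrally negative case as in \cite{HerPerYam2016}) furnishes a continuous density $v''_{b^*}$; one must check the absolute-continuity identity $v'_{b^*}(x)+\int_x^y v''_{b^*}(z)\diff z = v'_{b^*}(y)$ so that $v_{b^*}\in C^{(2)}_{\text{poly}}$, which should follow from the construction of the density together with the polynomial growth bounds. I would isolate these regularity facts as a preliminary lemma (the "Lemma \ref{prop_v_prime}" referenced in the excerpt).

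Next I would verify the HJB inequality. The infimum $\inf_{0\le r\le\alpha} r(\beta - v'_{b^*}(x))$ equals $0$ when $v'_{b^*}(x)\ge\beta$ and equals $\alpha(\beta - v'_{b^*}(x))$ when $v'_{b^*}(x)<\beta$. By Corollary \ref{cor_property_v_b}, $v'_{b^*}(x)\ge\beta$ exactly on $\{x\ge b^*\}$ and $v'_{b^*}(x)\le\beta$ on $\{x\le b^*\}$. So the claim splits into two regions: on $(b^*,\infty)$ we must show $(\mathcal{L}-q)v_{b^*}(x)+f(x)\ge 0$, and on $(-\infty,b^*)$ we must show $(\mathcal{L}-q)v_{b^*}(x)+\alpha(\beta - v'_{b^*}(x))+f(x)\ge 0$. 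The natural way to get these is to note that on each open region the controlled process $U^{b^*}$ behaves as a nice Markov process — equal to $X$ pushed at rate $0$ below $b^*$ (resp.\ $X^{(\alpha)}$ above $b^*$ in Case 1, or the barrier dynamics in Case 2) — so that $v_{b^*}$ solves the corresponding \emph{equality} $(\mathcal{L}-q)v_{b^*}+f=0$ on $(b^*,\infty)$ and $(\mathcal{L}^{(\alpha)}-q)v_{b^*}+f = 0$, equivalently $(\mathcal{L}-q)v_{b^*} -\alpha v'_{b^*}+f=0$, on $(-\infty,b^*)$, where $\mathcal{L}^{(\alpha)}$ is the generator of $X^{(\alpha)}$. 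Rewriting the second as $(\mathcal{L}-q)v_{b^*}+\alpha(\beta - v'_{b^*})+f = \alpha(\beta - v'_{b^*}) \ge 0$ on $(-\infty,b^*)$ by Corollary \ref{cor_property_v_b}, and observing the first gives equality (hence $\ge 0$) on $(b^*,\infty)$, establishes \eqref{HJB_eq} off the single point $b^*$; continuity of all terms (using $v_{b^*}\in C^{(1)}_{\text{poly}}$, resp.\ $C^{(2)}_{\text{poly}}$) then extends it to $x=b^*$. The endpoint cases $b^*=\pm\infty$ are handled by the corresponding single-region argument with $U^{b^*}\equiv X$ or $X^{(\alpha)}$.

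To make the "$v_{b^*}$ solves the PDE on each region" step rigorous I would use the expression \eqref{der_v} for $v'_{b^*}$ together with a dynamic-programming/Dynkin identity for the refracted process: apply Itô's formula to $e^{-qt}v_{b^*}(U^{b^*}_t)$ over a short excursion and take expectations, using the strong Markov property and the defining relation $v_{b^*}(x) = \E_x[\int_0^\infty e^{-qt}(f(U^{b^*}_t)+\beta l^{b^*}_t)\diff t]$; localizing and letting the horizon shrink yields the infinitesimal identity. Here the polynomial growth controls from Lemma \ref{Lem302} and Assumption \ref{Ass301} guarantee that all the integrals and the compensated jump integral in $\mathcal{L}$ converge and that the local martingale in Itô's formula is a true martingale.

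Finally, once \eqref{HJB_eq} is verified, Lemma \ref{verification} gives $v_{b^*}(x)\le \inf_{\pi\in\mathcal{A}}v_\pi(x) = v(x)$; the reverse inequality $v(x)\le v_{b^*}(x)$ is trivial because $\pi^{b^*}\in\mathcal{A}$. Hence $v = v_{b^*} = v_{\pi^{b^*}}$ and $\pi^* = \pi^{b^*}$ is optimal.

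\textbf{Main obstacle.} The genuinely hard part is already quarantined into Theorem \ref{density_v_b_star} (proved separately) and Assumption \ref{Ass401a}; within this proof the delicate step is the rigorous derivation of the infinitesimal identities $(\mathcal{L}-q)v_{b^*}+f = 0$ on $(b^*,\infty)$ and the analogous one on $(-\infty,b^*)$ for a two-sided-jump refracted process — in particular justifying the use of Itô's formula for $v_{b^*}$ in the unbounded variation case where only a continuous second density is available (not $C^2$), and controlling the compensated jump integral across the threshold $b^*$. The $C^{(2)}_{\text{poly}}$ framework and the operator $\mathcal{L}$ of \eqref{38} are tailored precisely to absorb this difficulty, so the argument should go through by following the structure of \cite{NOBA2023174, Nob2021}, but the bookkeeping near $b^*$ and the integrability estimates are where the care is needed.
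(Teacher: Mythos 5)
Your overall route is the paper's: regularity of $v_{b^*}$ (Theorem \ref{density_v_b_star} plus Assumption \ref{Ass401a}, packaged as Lemma \ref{prop_v_prime}), regional Feynman--Kac/Dynkin identities for the refracted process (the paper's Lemma \ref{lemma_martingale}), Corollary \ref{cor_property_v_b} to settle the sign of $\beta - v_{b^*}'$, then Lemma \ref{verification} and admissibility of $\pi^{b^*}$. However, the HJB verification step as you wrote it contains a genuine error, not just a typo: your case analysis of the infimum is inverted. Since $r\mapsto r(\beta-w'(x))$ is minimized at $r=\alpha$ when $\beta-w'(x)\le 0$, one has $\inf_{0\le r\le\alpha} r(\beta - v_{b^*}'(x))=\alpha(\beta-v_{b^*}'(x))$ precisely where $v_{b^*}'(x)\ge\beta$, i.e.\ on $\{x\ge b^*\}$, and $=0$ on $\{x\le b^*\}$ --- the opposite of what you state. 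Consequently your regional targets are swapped: on $(b^*,\infty)$ you must show $(\mathcal{L}-q)v_{b^*}+\alpha(\beta-v_{b^*}')+f\ge 0$, and merely checking $(\mathcal{L}-q)v_{b^*}+f\ge 0$ there (your plan) does not suffice, because the omitted term $\alpha(\beta-v_{b^*}')$ is nonpositive on that region.

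The same swap infects your claimed equalities. The uncontrolled dynamics ($l\equiv 0$, generator $\mathcal{L}$) hold \emph{below} $b^*$ and the pushed dynamics ($l\equiv\alpha$) \emph{above} $b^*$, so the correct identities are $(\mathcal{L}-q)v_{b^*}+f=0$ on $(-\infty,b^*)$ and $(\mathcal{L}-q)v_{b^*}+\alpha(\beta-v_{b^*}')+f=0$ on $[b^*,\infty)$ (the paper's Lemma \ref{lemma_martingale}); note also that on the controlled region the Feynman--Kac equation must carry the running control cost $+\alpha\beta$, which your version $(\mathcal{L}-q)v_{b^*}-\alpha v_{b^*}'+f=0$ drops, and your subsequent rewriting ``$=\alpha(\beta-v_{b^*}')$'' is not consistent with the equation you stated (it would give $\alpha\beta$, which need not be nonnegative since $\beta\in\R$ may be negative). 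With the identities and the infimum assigned to the correct regions, \eqref{HJB_eq} in fact holds with equality everywhere and the rest of your argument (Itô/martingale justification in the $C^{(2)}_{\text{poly}}$ framework, the limiting cases $b^*=\pm\infty$, and the final application of Lemma \ref{verification} together with admissibility) goes through exactly as in the paper; but as written, the verification of \eqref{HJB_eq} on $\{x>b^*\}$ fails and must be repaired as above.
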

To prove Theorem \ref{Thm401}, we provide the following two lemmas, which can be derived easily  using the smoothness results and assumptions. 
Below, we assume Assumptions  \ref{Ass301}, \ref{assump_f},  \ref{Ass202a},
	and \ref{Ass401a}.

\begin{lemma}\label{prop_v_prime}
(1) The function 
$v_{b^*}$
belongs to $C^{(1)}_{\text{poly}}$.
(2) When $X$ is of unbounded variation paths, $v_{b^*}$ belongs to  $C^{(2)}_{\text{poly}}$.
\end{lemma}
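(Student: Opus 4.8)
The plan is to treat Lemma~\ref{prop_v_prime} as a bookkeeping step. Part~(1) will follow from Theorem~\ref{density_v_b_star} together with a polynomial growth estimate; Part~(2) will follow from Theorem~\ref{density_v_b_star}, the spectrally negative regularity results of \cite{HerPerYam2016}, and Assumption~\ref{Ass401a}. There is no substantial obstacle here: essentially all of the analytic difficulty has been absorbed into Theorem~\ref{density_v_b_star}.

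For Part~(1), Theorem~\ref{density_v_b_star} already gives $v_{b^*}\in C^1(\R)$, so the only thing left is to check that $v_{b^*}$ is of polynomial growth. I would write $v_{b^*}(x)=\E_x[\int_0^\infty e^{-qt}f(U^{b^*}_t)\diff t]+\beta\,\E_x[\int_0^\infty e^{-qt}l^{b^*}_t\diff t]$ and bound the controlling term trivially by $|\beta|\alpha/q$. For the running-cost term I would use Remark~\ref{remark_X_alpha}, i.e.\ $X^{(\alpha)}_t\le U^{b^*}_t\le X_t$, together with the convexity of $f$: the upper bound $f(U^{b^*}_t)\le f(X_t)\vee f(X^{(\alpha)}_t)$, and, using the supporting line of $f$ at the \emph{fixed} point $0$, the lower bound $f(U^{b^*}_t)\ge f(0)+f'(0)U^{b^*}_t\ge -|f(0)|-|f'(0)|(|X_t|+\alpha t)$, since $|U^{b^*}_t|\le |X_t|\vee|X^{(\alpha)}_t|\le |X_t|+\alpha t$. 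This yields
\[
|f(U^{b^*}_t)|\le |f(X_t)|+|f(X^{(\alpha)}_t)|+|f(0)|+|f'(0)|\,|X_t|+|f'(0)|\alpha t,
\]
and integrating against $e^{-qt}\diff t$ and taking $\E_x$, the first two terms are of polynomial growth in $x$ by Lemma~\ref{Lem302} (with $g=f$), the $|X_t|$ term is of polynomial growth by Lemma~\ref{Lem302} (with $g(\cdot)=|\cdot|$), and the last two contribute the constants $|f(0)|/q$ and $|f'(0)|\alpha/q^2$. Hence $v_{b^*}$ is of polynomial growth and $v_{b^*}\in C^{(1)}_{\text{poly}}$; the argument is uniform and also covers the degenerate cases $b^*=\pm\infty$, where $U^{b^*}\equiv X$ or $U^{b^*}\equiv X^{(\alpha)}$.

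For Part~(2), assume $X$ has unbounded variation paths. By Part~(1) we have $v_{b^*}\in C^{(1)}_{\text{poly}}$, so by the definition of $C^{(2)}_{\text{poly}}$ it remains only to produce a continuous $v_{b^*}'':\R\to\R$ with $v_{b^*}'(x)+\int_x^y v_{b^*}''(z)\diff z=v_{b^*}'(y)$ for $x<y$. If $X$ is a spectrally negative \lev process, I would invoke the smooth-fit/regularity results of \cite{HerPerYam2016} recalled in Section~\ref{Subsec_Aux_Verification}, which give that $v_{b^*}$ is twice continuously differentiable in the unbounded variation case, hence in $C^{(2)}_{\text{poly}}$ by Part~(1). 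If $X$ is not spectrally negative, Assumption~\ref{Ass401a} asserts precisely that $v_{b^*}'$ (as in \eqref{der_v}, including the degenerate cases) admits a continuous density, which is exactly the required $v_{b^*}''$; combined with Part~(1), $v_{b^*}\in C^{(2)}_{\text{poly}}$. In either case $\mathcal{L}v_{b^*}$ is then well defined as noted after \eqref{38}.

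As for the main obstacle: there is none of substance, the statement being an assembly of Theorem~\ref{density_v_b_star}, \cite{HerPerYam2016}, Assumption~\ref{Ass401a}, and Lemma~\ref{Lem302}. The only point that requires a moment of care is arranging the polynomial growth bound in Part~(1) so that it does not generate a term of the form $\E_x[\int_0^\infty t e^{-qt}|f'(X_t)|\diff t]$; this is exactly why the supporting line of $f$ is taken at the fixed point $0$ rather than at $X_t$, after which Lemma~\ref{Lem302} applies directly.
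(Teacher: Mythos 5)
Your proposal is correct and follows essentially the same route as the paper, whose proof simply cites Theorem \ref{density_v_b_star} and Lemma \ref{Lem302} for part (1) and Assumption \ref{Ass401a} (with the spectrally negative case covered by \cite{HerPerYam2016}, as noted in the text preceding the assumption) for part (2). Your explicit polynomial-growth bound via $X^{(\alpha)}_t\le U^{b^*}_t\le X_t$, convexity, and the supporting line at $0$ is just a careful spelling-out of what the paper calls ``immediate.''
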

\begin{proof}
(1) It is immediate by Lemma \ref{Lem302} and Theorem \ref{density_v_b_star}.
%
(2) When the process $X$ has unbounded variation paths, by Assumption \ref{Ass401a},
we have that $v_{b^*}$ belongs to  $C^{(2)}_{\text{poly}}$.
\end{proof}

The following lemma can be shown via a standard martingale arguments as those in
 \cite[Lemma 5.7]{Nob2021} (see also \cite[Lemma 5.5]{NOBA2023174} and \cite[Lemma 4.1]{HerPerYam2016}),
  and thus we omit the proof.

 

\begin{lemma} \label{lemma_martingale} 
We have \begin{align}
\left(\mathcal{L}-q\right)v_{b^*}(x)+f(x)&=0,\qquad x<b^*, \\ 
\left(\mathcal{L}-q\right)v_{b^*}(x)+\alpha(\beta-v_{b^*}'(x))+f(x)&=0,\qquad x \geq b^\ast. 
\end{align}
\end{lemma}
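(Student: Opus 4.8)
The plan is to derive both identities at once from the observation that the ``cost-to-go'' process of the refraction strategy $\pi^{b^*}$ has vanishing drift. Fix $x\in\bR$ and work under $\bP_x$, so that $U^{b^*}$ starts at $x$ and solves \eqref{4}; by Assumption \ref{Ass202a} the solution is unique and strong, hence $U^{b^*}$ is a time-homogeneous strong Markov process. Set
\[
M_t := e^{-qt} v_{b^*}\bigl(U^{b^*}_t\bigr) + \int_0^t e^{-qs}\bigl(f(U^{b^*}_s) + \beta\, l^{b^*}_s\bigr)\,\diff s , \qquad t\geq 0 .
\]
By the definition of $v_{b^*}=v_{\pi^{b^*}}$ together with the Markov property and time-homogeneity, $e^{-qt}v_{b^*}(U^{b^*}_t)=\bE_x\bigl[\int_t^\infty e^{-qs}(f(U^{b^*}_s)+\beta l^{b^*}_s)\,\diff s\mid\cF_t\bigr]$, so $M_t=\bE_x\bigl[\int_0^\infty e^{-qs}(f(U^{b^*}_s)+\beta l^{b^*}_s)\,\diff s\mid\cF_t\bigr]$ is a true martingale; the integrability of the terminal variable follows from Lemma \ref{Lem302}, the bound $l^{b^*}\le\alpha$, and the polynomial growth of $v_{b^*}$ granted by Theorem \ref{density_v_b_star} and Lemma \ref{prop_v_prime}.

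Next I would apply the \lev-type change of variables (Meyer--It\^o) formula to $e^{-qt}v_{b^*}(U^{b^*}_t)$, exactly along the lines of \cite[Lemma 5.7]{Nob2021}. Here $U^{b^*}$ is the \lev process $X$ perturbed by the absolutely continuous drift $-\int_0^\cdot l^{b^*}_s\,\diff s$, and by Lemma \ref{prop_v_prime} the function $v_{b^*}$ belongs to $C^{(1)}_{\text{poly}}$ in the bounded variation case and to $C^{(2)}_{\text{poly}}$ in the unbounded variation case, which is exactly the regularity needed both for the formula and for $\mathcal{L}v_{b^*}$ to be well-defined (as in \cite[Remark 2.4]{Nob2021}). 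This yields
\[
M_t = v_{b^*}(x) + \int_0^t e^{-qs}\Bigl[(\mathcal{L}-q)v_{b^*}(U^{b^*}_s) + l^{b^*}_s\bigl(\beta-v_{b^*}'(U^{b^*}_s)\bigr) + f(U^{b^*}_s)\Bigr]\diff s + N_t ,
\]
with $N$ a local martingale, $N_0=0$. Since $M-v_{b^*}(x)$ is a martingale and $N$ a local martingale, the remaining integral term is a continuous finite-variation local martingale starting at $0$, hence vanishes identically. Therefore, for $\bP_x$-a.e.\ $\omega$ and Lebesgue-a.e.\ $s\geq 0$,
\[
(\mathcal{L}-q)v_{b^*}(U^{b^*}_s) + h^{b^*}(U^{b^*}_s)\bigl(\beta-v_{b^*}'(U^{b^*}_s)\bigr) + f(U^{b^*}_s) = 0 .
\]

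It remains to upgrade this ``time-almost-everywhere'' identity to the claimed pointwise identities in the state variable. Define $G:\bR\setminus\{b^*\}\to\bR$ by $G(y):=(\mathcal{L}-q)v_{b^*}(y)+f(y)$ for $y<b^*$ and $G(y):=(\mathcal{L}-q)v_{b^*}(y)+\alpha\bigl(\beta-v_{b^*}'(y)\bigr)+f(y)$ for $y>b^*$; by Lemma \ref{prop_v_prime}, the continuity of $f$ and $f'$, and the mapping properties of $\mathcal{L}$ on $C^{(1)}_{\text{poly}}$ and $C^{(2)}_{\text{poly}}$ (dominated convergence based on Assumption \ref{Ass301}), $G$ is continuous on $\bR\setminus\{b^*\}$. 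Since $h^{b^*}(y)=0$ for $y<b^*$ and $h^{b^*}(y)=\alpha$ for $y>b^*$, the previous display reads $G(U^{b^*}_s)=0$ for a.e.\ $s$ with $U^{b^*}_s\ne b^*$. Fix now any $x_0\in\bR\setminus\{b^*\}$ and start $U^{b^*}$ at $x_0$: as $x_0\ne b^*$, right-continuity of paths keeps $U^{b^*}$ away from $b^*$ on a (random) right-neighborhood of $0$, on which $G(U^{b^*}_s)=0$ for a.e.\ $s$; choosing $s_n\downarrow 0$ in this set and using $U^{b^*}_{s_n}\to x_0$ together with continuity of $G$ at $x_0$ gives $G(x_0)=0$, which are precisely the two identities for $x\ne b^*$. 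When $b^*\in(-\infty,\infty)$, letting $x\downarrow b^*$ and $x\uparrow b^*$ in the two branches and invoking the continuity of $\mathcal{L}v_{b^*}$, $v_{b^*}'$, $f$ together with $v_{b^*}'(b^*)=\beta$ (Lemma \ref{lemma_about_b_star}(ii)) shows that both branches agree at $b^*$ and equal $0$; the degenerate cases $b^*=+\infty$ (then $l^{b^*}\equiv 0$ and $U^{b^*}\equiv X$) and $b^*=-\infty$ (then $l^{b^*}\equiv\alpha$ and $U^{b^*}\equiv X^{(\alpha)}$, cf.\ Remark \ref{remark_X_alpha}) follow from the same computation with the appropriate constant rate.

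I expect the main obstacle to be the It\^o step in the unbounded-variation case, where $v_{b^*}$ is only ``twice differentiable'' in the weak $C^{(2)}_{\text{poly}}$ sense, so one must appeal to a Meyer--It\^o-type formula and the domain considerations for $\mathcal{L}$ as developed in \cite{Nob2021} rather than the classical It\^o formula. A secondary subtlety is Case 2, where $U^{b^*}$ spends positive time at the level $b^*$ with local rate $\delta$ rather than $\alpha$, so that the raw local identity at $b^*$ is $(\mathcal{L}-q)v_{b^*}(b^*)+\delta(\beta-v_{b^*}'(b^*))+f(b^*)=0$; this is reconciled with the stated form precisely because $v_{b^*}'(b^*)=\beta$. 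The remaining passage from a time-a.e.\ identity to a pointwise one is then routine given the continuity of $G$.
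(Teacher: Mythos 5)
Your proposal is correct and follows essentially the route the paper itself indicates (the paper omits the proof, citing the standard martingale argument of \cite[Lemma 5.7]{Nob2021}, \cite[Lemma 5.5]{NOBA2023174} and \cite[Lemma 4.1]{HerPerYam2016}): represent the cost-to-go process as a uniformly integrable martingale via the Markov property, apply the Meyer--It\^o/L\'evy change-of-variables formula using the regularity from Lemma \ref{prop_v_prime}, conclude the drift vanishes, and upgrade the time-a.e.\ identity to a pointwise one by continuity. Your handling of the Case 2 subtlety at the level $b^*$ (rate $\delta$ reconciled with the stated form through $v_{b^*}'(b^*)=\beta$) is exactly the right observation.
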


We are now ready to complete the proof of Theorem \ref{Thm401}.

\begin{proof}[Proof of Theorem \ref{Thm401}]
	By Lemma \ref{prop_v_prime}, the function $v_{b^\ast}$ is sufficiently smooth and is of polynomial growth and thus we can apply the operator $\mathcal{L}$. 
	In addition, 
	combining Corollary \ref{cor_property_v_b} and Lemma \ref{lemma_martingale},
	the function $v_{b^\ast}$ satisfies \eqref{HJB_eq}. 
	Therefore, by Lemma \ref{verification}, $v_{b^*}(x) \leq \inf_{\pi\in \mathcal{A}}v_\pi(x)$ for $x \in \R$. This also holds with equality because $\pi^{b^\ast}$ is admissible. This completes the proof. 
\end{proof}

\section{Proof of Theorem \ref{density_v_b_star}} \label{sec_proof_density_v_b_star}


We conclude this paper with a detailed
 proof of Theorem \ref{density_v_b_star}, which provided the key result essential for the verification of optimality in the previous section. We first consider the case $b^* \in (-\infty,\infty)$ and then the case $b^* = \pm \infty$. 

\subsection{For the case $b^* \in \mathbb{R}$}



For $\varepsilon \in \mathbb{R}$, let $X^{[\varepsilon]}=\{X^{[\varepsilon]}_t {=X_t +\varepsilon} : t\geq 0\}$ be the shifted process of $X$ by $\varepsilon$.
In addition, let $l^{[\varepsilon],b^\ast}=\{l^{[\varepsilon], b^\ast}_t : t\geq 0\}$ be the refraction strategy at level $b^*$ and $U^{[\varepsilon], b^\ast}=\{U^{[\varepsilon], b^\ast}_t : t\geq 0\}$ its corresponding refracted L\'evy process driven by $X^{[\varepsilon]}$; i.e. $U^{[\varepsilon], b^\ast}_t=X^{[\varepsilon]}_t - L^{[\varepsilon], b^\ast}_t$ 
where $ L^{[\varepsilon], b^\ast}_t:= \int_0^t  l^{[\varepsilon], b^\ast}_s \diff s $ for $t\geq 0$.  It is easy to confirm that
\begin{align} 
l^{[\varepsilon], b^\ast}_t = l^{b^\ast-\varepsilon}_t \quad {\textrm{and} \quad U^{[\varepsilon], b^\ast}_t = U^{b^\ast-\varepsilon}_t + \varepsilon, } \quad t\geq 0.
\end{align}


Then, by the same argument as that of
 the proof of \cite[Lemma D.1]{NOBA2023174}, 
{the following properties hold for the difference of the control and controlled processes.}
\begin{lemma} \label{lemma_property_diff_process} 
For 
$\varepsilon > 0$, the following holds a.s.:
\begin{enumerate}
\item[(a)] The map $t \mapsto U^{[\varepsilon], b^\ast}_t-U^{b^\ast}_t$
 is non-increasing and takes values on $[0, \varepsilon]$; 
 \item[(b)] The map $t \mapsto L^{[\varepsilon], b^\ast}_t-L^{b^\ast}_t$
 is non-decreasing and takes values on $[0, \varepsilon]$; 
 \item[(c)] The support of the Stieltjes measure of $\{U^{[\varepsilon], b^\ast}_t-U^{b^\ast}_t: t \geq 0\}$ and $\{L^{[\varepsilon], b^\ast}_t - L^{b^\ast}_t; t  \geq  0\}$ are in the closure of $\{ t\geq 0 :  b^* \in  [ U^{b^\ast }_t, U^{[\varepsilon],b^\ast}_t  ], 
U^{b^\ast}_t \neq U^{[\varepsilon],b^\ast }_t\}$.
\end{enumerate}
\end{lemma}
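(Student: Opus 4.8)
The plan is to exploit the fact that, although $U^{b^\ast}$ and $U^{[\varepsilon],b^\ast}$ are jump processes, their difference is absolutely continuous. Since $U^{[\varepsilon],b^\ast}$ is driven by $X^{[\varepsilon]}=X+\varepsilon$, subtracting the two instances of \eqref{4} cancels the common (and only) source of jumps, leaving
\begin{align}
D_t := U^{[\varepsilon],b^\ast}_t - U^{b^\ast}_t = \varepsilon - \int_0^t\bigl(h^{b^\ast}(U^{[\varepsilon],b^\ast}_s)-h^{b^\ast}(U^{b^\ast}_s)\bigr)\,\diff s ,\qquad t\geq 0 ,
\end{align}
so that $D$ is $\alpha$-Lipschitz with $D_0=\varepsilon$ and, for a.e.\ $s$, $\dot D_s = -\bigl(h^{b^\ast}(U^{b^\ast}_s+D_s)-h^{b^\ast}(U^{b^\ast}_s)\bigr)$ because $U^{[\varepsilon],b^\ast}_s=U^{b^\ast}_s+D_s$. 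The only structural input needed is that $h^{b^\ast}$ is non-decreasing on $\bR$ in both Case $1$ and Case $2$ (in Case $2$ this uses $\delta\in[0,\alpha]$). Everything below is carried out pathwise, for a.e.\ $\omega$.

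For (a), I would run an ordinary-differential-inequality argument for $D$. To get $D_t\geq0$, put $\phi_t:=\max(-D_t,0)$, which is absolutely continuous with $\phi_0=0$: on $\{s:D_s<0\}$ monotonicity of $h^{b^\ast}$ gives $\dot D_s\geq0$, hence $\dot\phi_s=-\dot D_s\leq0$; on $\{s:D_s>0\}$ one has $\phi\equiv0$ locally; and on $\{s:D_s=0\}$ both $\dot D_s=0$ and $\dot\phi_s=0$ for a.e.\ such $s$, since the derivative of an absolutely continuous function vanishes a.e.\ on any of its level sets. Thus $\dot\phi\leq0$ a.e., so $\phi$ is non-increasing and therefore $\phi\equiv0$, i.e.\ $D\geq0$. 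Knowing $D\geq0$, the a.e.\ identity for $\dot D$ together with monotonicity of $h^{b^\ast}$ (where $D_s>0$) and $\dot D_s=0$ a.e.\ (where $D_s=0$) gives $\dot D\leq0$ a.e., so $D$ is non-increasing and hence $D_t\in[0,\varepsilon]$. Part (b) is then immediate from $L^{[\varepsilon],b^\ast}_t-L^{b^\ast}_t=\varepsilon-D_t$.

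For (c), I would note that both Stieltjes measures coincide, up to sign, with $-\bigl(h^{b^\ast}(U^{[\varepsilon],b^\ast}_s)-h^{b^\ast}(U^{b^\ast}_s)\bigr)\diff s$, which is absolutely continuous with respect to Lebesgue measure and hence carried by $A:=\{s\geq0:h^{b^\ast}(U^{[\varepsilon],b^\ast}_s)\neq h^{b^\ast}(U^{b^\ast}_s)\}$, so its support lies in $\overline A$. Since $U^{[\varepsilon],b^\ast}_s\geq U^{b^\ast}_s$ by (a) and $h^{b^\ast}$ is constant on each of $(-\infty,b^\ast)$, $\{b^\ast\}$ and $(b^\ast,\infty)$, the inequality $h^{b^\ast}(U^{[\varepsilon],b^\ast}_s)\neq h^{b^\ast}(U^{b^\ast}_s)$ forces $U^{b^\ast}_s\neq U^{[\varepsilon],b^\ast}_s$ and $b^\ast\in[U^{b^\ast}_s,U^{[\varepsilon],b^\ast}_s]$; hence $A$ is contained in the set appearing in (c), and the claim follows on taking closures.

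The main obstacle is the level-set bookkeeping behind (a): because $h^{b^\ast}$ is only piecewise constant (not Lipschitz) and $X$ has two-sided jumps, no direct SDE comparison theorem applies, and one must instead lean on the absolute continuity of $D$, the a.e.\ identity $\dot D_s=-(h^{b^\ast}(U^{b^\ast}_s+D_s)-h^{b^\ast}(U^{b^\ast}_s))$ (valid at every Lebesgue point of the bounded integrand), and the vanishing of derivatives a.e.\ on level sets. This is precisely the scheme of \cite[Lemma D.1]{NOBA2023174} transcribed to the present notation; all remaining steps are routine.
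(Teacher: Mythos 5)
Your argument is correct, and it is essentially the paper's own route: the paper proves this lemma by simply invoking the argument of Lemma D.1 of \cite{NOBA2023174}, which is exactly the pathwise scheme you reproduce — cancel the common jumps to get the absolutely continuous, $\alpha$-Lipschitz difference $D$, use monotonicity of $h^{b^\ast}$ (valid in Case 2 precisely because $\delta\in[0,\alpha]$) together with the a.e.\ derivative identity and the level-set argument to get (a) and (b), and read off (c) from the fact that the Stieltjes measure has density $h^{b^\ast}(U^{[\varepsilon],b^\ast}_s)-h^{b^\ast}(U^{b^\ast}_s)$, which vanishes off the set in question. Nothing further is needed.
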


In Lemma \ref{lemma_property_diff_process}, the property (c) means that the difference of the controlled process changes only when $U^{[\varepsilon],b^\ast}$ is in the control region and when $U^{b^\ast}$ is in the uncontrol region.


Fix  $\varepsilon> 0$ and  $x\in\bR$. We have 
\begin{align}
v_{b^\ast}(x+\varepsilon)-v_{b^\ast}(x)=
u_1 (x, \varepsilon) + \beta u_2 (x, \varepsilon), \label{7}
\end{align}
where
\begin{align}
u_1 (x, \varepsilon) &:= \bE_x \sbra{\int_0^\infty e^{-qt}\rbra{f(U^{[\varepsilon], b^\ast}_t)-f(U^{  b^\ast}_t) }\diff t}, \\
u_2 (x, \varepsilon) &:=  \bE_x \sbra{\int_0^\infty e^{-qt} \rbra{l^{[\varepsilon],b^\ast}_t-l^{  b^\ast}_t} \diff t}.
\end{align}


We want to give bounds on \eqref{7} to compute the right derivative of $v_{b^\ast}$. 
To this end, we take a sequence of stopping times 
\[
0 =: T^{(0)}_{[\eta]} \leq S^{(1)}_{[\eta]} < T^{(1)}_{[\eta]} \leq S^{(2)}_{[\eta]} < \cdots
\]
so that $t \mapsto L^{[\varepsilon], b^\ast}_{t}-L^{  b^\ast}_{t }$ may increase (equivalently, $t \mapsto U^{[\varepsilon], b^\ast}_{t}-U^{  b^\ast}_{t }$ may decrease) only at $t \in \bigcup_{n\in\bN} [{S^{(n)}_{[\eta]}, T^{(n)}_{[\eta]}}]$ and stay constant on its complement  $\bigcup_{n\in\bN} (T^{(n-1)}_{[\eta]}, S^{(n)}_{[\eta]} )$. This is done by recursively defining by, for $n \geq 1$,
\begin{align}
S^{(n)}_{[\eta]} &:= 
\inf \cbra{ t>T^{(n-1)}_{[\eta]}:  U^{  b^\ast}_t\leq b^\ast, U^{[\varepsilon], b^\ast}_t \geq b^\ast, U^{  b^\ast}_t \neq  U^{[\varepsilon], b^\ast}_t  }, \\
T^{(n)}_{[\eta]} &:= 
S^{(n)}_{[\eta]}+\eta,  
\end{align}
for a constant $\eta > 0$.
It is noted that $S^{(n)}_{[\eta]}$ and $T^{(n-1)}_{[\eta]}$ can coincide. But it is clear that both $S^{(n)}_{[\eta]}$ and $T^{(n)}_{[\eta]}$ go to infinity as $n \to \infty$ (and can be infinite for finite $n$). 
By Lemma \ref{lemma_property_diff_process}(c), it is indeed that $t \mapsto L^{[\varepsilon], b^\ast}_{t}-L^{  b^\ast}_{t }$ may increase only on $\bigcup_{n\in\bN} [{S^{(n)}_{[\eta]}, T^{(n)}_{[\eta]}}]$. 

We define, for $n \in \mathbb{N}$,
\begin{align}
\varepsilon^{(n)}_{[\eta]}{:=}&
\rbra{L^{[\varepsilon], b^\ast}_{T^{(n)}_{[\eta]}}-L^{  b^\ast}_{T^{(n)}_{[\eta]}}}-\rbra{L^{[\varepsilon], b^\ast}_{S^{(n)}_{[\eta]}}-L^{  b^\ast}_{S^{(n)}_{[\eta]}}}=\int_{S^{(n)}_{[\eta]}}^{T^{(n)}_{[\eta]}}\rbra{l^{[\varepsilon], b^\ast}_t-l^{ b^\ast}_t}\diff t, 
\label{epsilon_def}
\end{align} 
to be the 
amount of change during $[S^{(n)}_\eta, T^{(n)}_\eta]$.
From Lemma \ref{lemma_property_diff_process}(b), we have 
\begin{align}
\varepsilon^{(n)}_{[\eta]}\geq 0, \quad n\in\bN \label{Rev002}
\end{align} 
and
\begin{align}
\sum_{n\in{\bN}}\varepsilon^{(n)}_{[\eta]} {=\lim_{t \to \infty} (L^{[\varepsilon], b^\ast}_t-L^{b^\ast}_t}) \leq \varepsilon. \label{epsilon_bound}
\end{align} 

By these properties, 
we can rewrite 
\begin{align} \label{u_2_decomposition}
\begin{aligned}
u_2(x, \varepsilon) = \bE_x \sbra{\sum_{n\in\bN} \int_{S^{(n)}_{[\eta]}}^{T^{(n)}_{[\eta]}} e^{-qt} 
(l^{[\varepsilon],b^\ast}_t-l^{  b^\ast}_t ) \diff t},
\end{aligned}
\end{align}
where it is understood that $\int_\infty^\infty (\cdot) \diff t =0$.  

\begin{lemma} We have
\begin{align} \label{beta_epsilon_bound}
\left| \beta u_2(x, \varepsilon)
 - \beta \bE_x \sbra{\sum_{n\in\bN} e^{-qT^{(n)}_{[\eta]}} \varepsilon^{(n)}_{[\eta]} } \right| \leq |\beta| \varepsilon (1 - e^{-q \eta}).
\end{align}
\end{lemma}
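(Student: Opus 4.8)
The quantity $\beta u_2(x,\varepsilon)$ has already been rewritten in \eqref{u_2_decomposition} as an expectation of a sum of integrals over the intervals $[S^{(n)}_{[\eta]}, T^{(n)}_{[\eta]}]$ of length $\eta$, and the target quantity replaces the integral $\int_{S^{(n)}_{[\eta]}}^{T^{(n)}_{[\eta]}} e^{-qt}(l^{[\varepsilon],b^\ast}_t - l^{b^\ast}_t)\diff t$ by $e^{-qT^{(n)}_{[\eta]}}\varepsilon^{(n)}_{[\eta]}$. Since $\varepsilon^{(n)}_{[\eta]} = \int_{S^{(n)}_{[\eta]}}^{T^{(n)}_{[\eta]}}(l^{[\varepsilon],b^\ast}_t - l^{b^\ast}_t)\diff t$ by \eqref{epsilon_def}, the natural approach is to compare, on each interval $[S^{(n)}_{[\eta]}, T^{(n)}_{[\eta]}]$, the discount factor $e^{-qt}$ appearing inside the integral with the single value $e^{-qT^{(n)}_{[\eta]}}$: the difference between the two integrands is $(e^{-qt} - e^{-qT^{(n)}_{[\eta]}})(l^{[\varepsilon],b^\ast}_t - l^{b^\ast}_t)$, and on $t\in[S^{(n)}_{[\eta]}, T^{(n)}_{[\eta]}]$ with $T^{(n)}_{[\eta]} = S^{(n)}_{[\eta]} + \eta$ we have $0 \le e^{-qt} - e^{-qT^{(n)}_{[\eta]}} \le e^{-qt}(1 - e^{-q\eta})$.

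**Key steps.** First I would write the difference inside the absolute value as
\[
\beta u_2(x,\varepsilon) - \beta\,\bE_x\Bigl[\sum_{n\in\bN} e^{-qT^{(n)}_{[\eta]}}\varepsilon^{(n)}_{[\eta]}\Bigr]
= \beta\,\bE_x\Bigl[\sum_{n\in\bN}\int_{S^{(n)}_{[\eta]}}^{T^{(n)}_{[\eta]}} \bigl(e^{-qt} - e^{-qT^{(n)}_{[\eta]}}\bigr)\bigl(l^{[\varepsilon],b^\ast}_t - l^{b^\ast}_t\bigr)\diff t\Bigr].
\]
Next, take absolute values inside, use $|l^{[\varepsilon],b^\ast}_t - l^{b^\ast}_t|\cdot\mathrm{sgn}$ — actually, by Lemma \ref{lemma_property_diff_process}(b) (equivalently \eqref{Rev002}) the integrand $l^{[\varepsilon],b^\ast}_t - l^{b^\ast}_t$ is nonnegative on these intervals, and $e^{-qt} - e^{-qT^{(n)}_{[\eta]}} \ge 0$ there as well, so the inner sum is itself nonnegative and we may drop the outer absolute value up to the factor $|\beta|$. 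Then bound $e^{-qt} - e^{-qT^{(n)}_{[\eta]}} \le e^{-qt}(1 - e^{-q\eta})$ (valid since $T^{(n)}_{[\eta]} \le t + \eta$ on the interval), pull out the constant $(1-e^{-q\eta})$, and recognize that what remains is $|\beta|(1-e^{-q\eta})$ times
\[
\bE_x\Bigl[\sum_{n\in\bN}\int_{S^{(n)}_{[\eta]}}^{T^{(n)}_{[\eta]}} e^{-qt}\bigl(l^{[\varepsilon],b^\ast}_t - l^{b^\ast}_t\bigr)\diff t\Bigr] = u_2(x,\varepsilon)/1,
\]
which by \eqref{u_2_decomposition} equals $u_2(x,\varepsilon)$, and finally $u_2(x,\varepsilon) \le \bE_x[\sum_n \varepsilon^{(n)}_{[\eta]}] \le \varepsilon$ by \eqref{epsilon_bound}. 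Assembling these gives the bound $|\beta|\varepsilon(1-e^{-q\eta})$.

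**Expected main obstacle.** The argument is essentially routine once the decomposition \eqref{u_2_decomposition} is in hand; the only points demanding care are the justification of interchanging the (infinite) sum with the expectation and the integral — which is licensed by the nonnegativity in \eqref{Rev002} together with Tonelli's theorem and the summability \eqref{epsilon_bound} — and making sure the inequality $e^{-qt} - e^{-qT^{(n)}_{[\eta]}} \le e^{-qt}(1-e^{-q\eta})$ is applied with the correct sign on the correct interval (it uses only $t \le T^{(n)}_{[\eta]} \le t+\eta$). I do not anticipate a genuine difficulty here; this lemma is a preparatory estimate whose role is to let one replace the awkward time-integral of the control-rate difference by the cleaner boundary term $e^{-qT^{(n)}_{[\eta]}}\varepsilon^{(n)}_{[\eta]}$ at the cost of an error that vanishes as $\eta\downarrow 0$.
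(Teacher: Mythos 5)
Your proof is correct and follows essentially the same route as the paper: on each interval $[S^{(n)}_{[\eta]},T^{(n)}_{[\eta]}]$ you compare the discount factor with $e^{-qT^{(n)}_{[\eta]}}$, exploit the nonnegativity of $l^{[\varepsilon],b^\ast}_t-l^{b^\ast}_t$ to keep signs, bound the discrepancy by the factor $(1-e^{-q\eta})$ using $T^{(n)}_{[\eta]}=S^{(n)}_{[\eta]}+\eta$, and finish with $\sum_n \varepsilon^{(n)}_{[\eta]}\leq\varepsilon$ from \eqref{epsilon_bound}, which is exactly the paper's sandwich argument in \eqref{a009} written at the level of the integrand rather than via $e^{-qS^{(n)}_{[\eta]}}$.
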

\begin{proof}
 {From \eqref{Rev002} and the definition of $\varepsilon^{(n)}_{[\eta]}$ {as in \eqref{epsilon_def}}, 
 we have}
\begin{multline}
\bE_x \sbra{\sum_{n\in\bN} e^{-qT^{(n)}_{[\eta]}} \varepsilon^{(n)}_{[\eta]} }
\leq
u_2(x, \varepsilon) 
\leq 
\bE_x \sbra{\sum_{n\in\bN} e^{-qS^{(n)}_{[\eta]}}\varepsilon^{(n)}_{[\eta]}} 
 \leq
\bE_x \sbra{\sum_{n\in\bN} e^{-qT^{(n)}_{[\eta]}}\varepsilon^{(n)}_{[\eta]}}
+ \varepsilon (1 -e^{-q\eta}),
\label{a009}
 \end{multline}
 {
where the last inequality holds because \eqref{epsilon_bound} gives
\begin{multline} 
\bE_x \sbra{\sum_{n\in\bN} (e^{-qS^{(n)}_{[\eta]}} - e^{-qT^{(n)}_{[\eta]}}) \varepsilon^{(n)}_{[\eta]}} = 
\bE_x \sbra{\sum_{n\in\bN} (e^{-qS^{(n)}_{[\eta]}} - e^{-q (S^{(n)}_{[\eta]} + \eta)}) \varepsilon^{(n)}_{[\eta]}} \\
= (1- e^{-q \eta}) \bE_x \sbra{\sum_{n\in\bN} e^{-qS^{(n)}_{[\eta]}} \varepsilon^{(n)}_{[\eta]}} \leq  (1- e^{-q \eta}) \bE_x \sbra{\sum_{n\in\bN} \varepsilon^{(n)}_{[\eta]}} \leq \varepsilon (1 - e^{-q \eta}).
\end{multline}
}
By \eqref{u_2_decomposition} and \eqref{a009}, the proof is complete.
\end{proof}

We have 
\begin{align}
\int_0^\infty e^{-qt}
\frac{f(U^{[\varepsilon], b^\ast}_t)-f(U^{  b^\ast}_t) }{\varepsilon}\diff t
&=
F_1(\varepsilon)-F_2(\varepsilon),
\label{9}
\end{align}
where 
\begin{align}
F_1(\varepsilon) &:= \int_0^\infty
e^{-qt}
\frac{f(U^{[\varepsilon], b^\ast}_t)-f(U^{  b^\ast}_t) }{U^{[\varepsilon], b^\ast}_t-U^{  b^\ast}_t }\diff t, \\
F_2(\varepsilon) &:=\int_0^\infty
\frac{\varepsilon -\rbra{U^{[\varepsilon], b^\ast}_t-U^{  b^\ast}_t}  }{\varepsilon}
e^{-qt}
\frac{f(U^{[\varepsilon], b^\ast}_t)-f(U^{  b^\ast}_t) }{U^{[\varepsilon], b^\ast}_t-U^{  b^\ast}_t }\diff t.
\end{align}
Since $f$ is {convex}, 
\begin{align}
{ f^\prime(U^{b^\ast}_t )} \leq
\frac{f(U^{[\varepsilon], b^\ast}_t)-f(U^{  b^\ast}_t) }{U^{[\varepsilon], b^\ast}_t-U^{  b^\ast}_t }
\leq {f^\prime(U^{[\varepsilon], b^\ast}_t ) }
,\quad t\geq 0, \label{10}
\end{align}
{and thus
\begin{align} \label{F_1_bound}
\int_0^\infty e^{-qt}f^\prime(U^{b^\ast}_t )\diff t \leq F_1(\varepsilon) \leq \int_0^\infty e^{-qt}f^\prime(U^{[\varepsilon],b^\ast}_t )\diff t.
\end{align}
}

Since $U^{[\varepsilon], b^\ast}_t+L^{[\varepsilon], b^\ast}_t =X_t + \varepsilon
=U^{  b^\ast}_t+L^{  b^\ast}_t + \varepsilon $ for $t\geq 0 $, we have, for $n\in\bN$,   
\begin{align}
\frac{\varepsilon -\rbra{U^{[\varepsilon], b^\ast}_t-U^{  b^\ast}_t}  }{\varepsilon}
=\frac{L^{[\varepsilon], b^\ast}_t-L^{  b^\ast}_t  }{\varepsilon}
=\left\{ 
\begin{array}{ll}
\sum_{k=1}^{n-1} \frac{\varepsilon^{(k)}_{[\eta]} }{\varepsilon}, 
& t\in[T^{(n-1)}_{[\eta]}, S^{(n)}_{[\eta]} ), \\
\sum_{k=1}^{n-1}\frac{\varepsilon^{(k)}_{[\eta]} }{\varepsilon}
+a^{(n)}_t, & t\in[S^{(n)}_{[\eta]}, T^{(n)}_{[\eta]} ), \end{array} \right.
\label{11}
\end{align}
for some non-decreasing process $a^{(n)}=\{a^{(n)}_t: t\in [S^{(n)}_{[\eta]}, T^{(n)}_{[\eta]} )\}$  satisfying 
\begin{align}a^{(n)}_t \in \left[0, \varepsilon^{(n)}_{[\eta]}/\varepsilon \right]. \label{a_bound}
\end{align} 

On the other hand, we have
\begin{align}
{F_2(\varepsilon)}
= \sum_{n \in \mathbb{N}} (A_n + B_n),
\end{align}
where, by \eqref{11}, 
\begin{align}
A_n &:= \int_{T^{(n-1)}_{[\eta]}}^{S^{(n)}_{[\eta]}}
\frac{\varepsilon -\rbra{U^{[\varepsilon], b^\ast}_t-U^{  b^\ast}_t}  }{\varepsilon}
e^{-qt}
\frac{f(U^{[\varepsilon], b^\ast}_t)-f(U^{  b^\ast}_t) }{U^{[\varepsilon], b^\ast}_t-U^{  b^\ast}_t }\diff t \\
&=  \sum_{k=1}^{n-1} \frac{\varepsilon^{(k)}_{[\eta]} }{\varepsilon}\int_{T^{(n-1)}_{[\eta]}}^{S^{(n)}_{[\eta]}}
e^{-qt}
\frac{f(U^{[\varepsilon], b^\ast}_t)-f(U^{  b^\ast}_t) }{U^{[\varepsilon], b^\ast}_t-U^{  b^\ast}_t }\diff t, \\
B_n &:= \int_{S^{(n)}_{[\eta]}}^{T^{(n)}_{[\eta]}}
\frac{\varepsilon -\rbra{U^{[\varepsilon], b^\ast}_t-U^{  b^\ast}_t}  }{\varepsilon}
e^{-qt}
\frac{f(U^{[\varepsilon], b^\ast}_t)-f(U^{  b^\ast}_t) }{U^{[\varepsilon], b^\ast}_t-U^{  b^\ast}_t }\diff t \\
&= \int_{S^{(n)}_{[\eta]}}^{T^{(n)}_{[\eta]}}\left(\sum_{k=1}^{n-1}\frac{\varepsilon^{(k)}_{[\eta]} }{\varepsilon}
+a^{(n)}_t \right) e^{-qt}
\frac{f(U^{[\varepsilon], b^\ast}_t)-f(U^{  b^\ast}_t) }{U^{[\varepsilon], b^\ast}_t-U^{  b^\ast}_t }\diff t.
\end{align}

From \eqref{10}, we have 
\begin{align}
 \sum_{k=1}^{n-1} \frac{\varepsilon^{(k)}_{[\eta]} }{\varepsilon}
 \int_{T^{(n-1)}_{[\eta]}}^{S^{(n)}_{[\eta]}}
e^{-qt}f^\prime (U^{b^\ast}_t)\diff t\leq
A_n
&\leq \sum_{k=1}^{n-1} \frac{\varepsilon^{(k)}_{[\eta]} }{\varepsilon}
 \int_{T^{(n-1)}_{[\eta]}}^{S^{(n)}_{[\eta]}}
e^{-qt}f^\prime (U^{[\varepsilon], b^\ast}_t)\diff t, \label{16}
\end{align}
and thus by Fubini's theorem,
\begin{multline}
 \sum_{k\in\bN} \frac{\varepsilon^{(k)}_{[\eta]} }{\varepsilon}
 \sum_{n=k+1}^\infty
 \int_{T^{(n-1)}_{[\eta]}}^{S^{(n)}_{[\eta]}}
e^{-qt}f^\prime (U^{b^\ast}_t)\diff t
\leq\sum_{n\in\bN} A_n 
  \leq
 \sum_{k\in\bN} \frac{\varepsilon^{(k)}_{[\eta]} }{\varepsilon}
 \sum_{n=k+1}^\infty
 \int_{T^{(n-1)}_{[\eta]}}^{S^{(n)}_{[\eta]}}
e^{-qt}f^\prime (U^{[\varepsilon], b^\ast}_t)\diff t.
\label{26}
\end{multline}


From \eqref{10} and \eqref{11}, together with \eqref{a_bound},
\begin{multline}
\sum_{k=1}^{n-1}\frac{\varepsilon^{(k)}_{[\eta]} }{\varepsilon}\int_{S^{(n)}_{[\eta]}}^{T^{(n)}_{[\eta]}} e^{-qt}f^\prime(U^{ b^\ast}_t )\diff t
- \frac{\varepsilon^{(n)}_{[\eta]} }{\varepsilon}\int_{S^{(n)}_{[\eta]}}^{T^{(n)}_{[\eta]}} e^{-qt} \left[ f^\prime(U^{ b^\ast}_t ) \right]^- \diff t\\
\leq
B_n
\leq
\sum_{k=1}^{n-1}\frac{\varepsilon^{(k)}_{[\eta]} }{\varepsilon}\int_{S^{(n)}_{[\eta]}}^{T^{(n)}_{[\eta]}} e^{-qt}f^\prime(U^{[\varepsilon], b^\ast}_t )\diff t+
\frac{\varepsilon^{(n)}_{[\eta]} }{\varepsilon}\int_{S^{(n)}_{[\eta]}}^{T^{(n)}_{[\eta]}} e^{-qt} \left[f^\prime(U^{[\varepsilon], b^\ast}_t ) \right]^+ \diff t,
\label{17}
\end{multline}
where we denote $[x]^+ := x \vee 0$ and $[x]^- := -(x \wedge 0)$ so that $x = [x]^+ - [x]^-$.

Thus, by summing up and Fubini's theorem,
\begin{align}
\begin{aligned}
&\sum_{k\in\bN}  \frac{\varepsilon^{(k)}_{[\eta]} }{\varepsilon}
\rbra{ - \int_{S^{(k)}_{[\eta]}}^{T^{(k)}_{[\eta]}} e^{-qt}\left[ f^\prime(U^{ b^\ast}_t ) \right]^- \diff t
+\sum_{n=k+1}^\infty \int_{S^{(n)}_{[\eta]}}^{T^{(n)}_{[\eta]}} e^{-qt}f^\prime(U^{ b^\ast}_t )\diff t}\leq
\sum_{n\in\bN}B_n
\\
&
\leq
\sum_{k\in\bN}  \frac{\varepsilon^{(k)}_{[\eta]} }{\varepsilon}
\rbra{\int_{S^{(k)}_{[\eta]}}^{T^{(k)}_{[\eta]}} e^{-qt}\left[ f^\prime(U^{[\varepsilon], b^\ast}_t ) \right]^+ \diff t
+\sum_{n=k+1}^\infty \int_{S^{(n)}_{[\eta]}}^{T^{(n)}_{[\eta]}} e^{-qt}f^\prime(U^{ [\varepsilon]. b^\ast}_t )\diff t}.
\end{aligned}
\label{27}
\end{align}
Thus, noting $\sum_{n=k+1}^\infty
 \big(\int_{T^{(n-1)}_{[\eta]}}^{S^{(n)}_{[\eta]}}
e^{-qt}f^\prime (U^{b^\ast}_t)\diff t +   \int_{S^{(n)}_{[\eta]}}^{T^{(n)}_{[\eta]}} e^{-qt}f^\prime(U^{ b^\ast}_t )\diff t  \big) = \int_{T^{(k)}_{[\eta]}}^\infty e^{-qt}f^\prime(U^{b^\ast}_t )\diff t$  and similar for the case when $U^{b^*}$ is replaced with $U^{[\varepsilon], b^*}$, 
%
from \eqref{9}, \eqref{F_1_bound}, \eqref{26} and \eqref{27}, we have 
\begin{align}
\begin{aligned}
&\int_0^\infty e^{-qt}f^\prime(U^{ b^\ast}_t )\diff t
-\sum_{k\in\bN}\frac{\varepsilon^{(k)}_{[\eta]} }{\varepsilon}\rbra{\int_{S^{(k)}_{[\eta]}}^{T^{(k)}_{[\eta]}} e^{-qt}\left[ f^\prime(U^{[\varepsilon],b^\ast}_t ) \right]^+ \diff t
+\int_{T^{(k)}_{[\eta]}}^\infty e^{-qt}f^\prime(U^{[\varepsilon],b^\ast}_t )\diff t }
\\
&\leq\int_0^\infty e^{-qt}\frac{f(U^{[\varepsilon], b^\ast}_t)-f(U^{  b^\ast}_t) }{\varepsilon}\diff t
\\
&\leq \int_0^\infty e^{-qt}f^\prime(U^{[\varepsilon],b^\ast}_t )\diff t
-\sum_{k\in\bN}\frac{\varepsilon^{(k)}_{[\eta]} }{\varepsilon}\rbra{- \int_{S^{(k)}_{[\eta]}}^{T^{(k)}_{[\eta]}} e^{-qt}\left[ f^\prime(U^{ b^\ast}_t ) \right]^- \diff t
+\int_{T^{(k)}_{[\eta]}}^\infty e^{-qt}f^\prime(U^{b^\ast}_t )\diff t  }
.
\end{aligned}
\label{12}
\end{align}
Using the strong Markov property at $T^{(k)}_{[\eta]}$ {and because $\varepsilon^{(k)}_{[\eta]}$ is $\mathcal{F}_{T^{(k)}_{[\eta]}}$-measurable}, we have 
\begin{align}
\bE_x\sbra{ \frac{\varepsilon^{(k)}_{[\eta]} }{\varepsilon}\rbra{\int_{T^{(k)}_{[\eta]}}^\infty e^{-qt}f^\prime(U^{[\varepsilon], b^\ast}_t )\diff t}} &=\bE_x\sbra{ \frac{\varepsilon^{(k)}_{[\eta]} }{\varepsilon}\rbra{e^{-qT^{(k)}_{[\eta]}} R^{(q)}(U^{[\varepsilon], b^\ast}_{T^{(k)}_{[\eta]}})}  } \label{a012}
\end{align}
and
\begin{align}
\bE_x\sbra{ \frac{\varepsilon^{(k)}_{[\eta]} }{\varepsilon}\rbra{\int_{T^{(k)}_{[\eta]}}^\infty e^{-qt}f^\prime(U^{b^\ast}_t )\diff t}} =\bE_x\sbra{ \frac{\varepsilon^{(k)}_{[\eta]} }{\varepsilon}\rbra{e^{-qT^{(k)}_{[\eta]}} R^{(q)}(U^{b^\ast}_{T^{(k)}_{[\eta]}})  }},  \label{14}
\end{align}
where $R^{(q)}(x):=\E_x\left[\int_0^{\infty}e^{-qt} f'(U_t^{b^*}) \diff t\right]$. 
From 
\eqref{7}, \eqref{beta_epsilon_bound}, \eqref{12}, \eqref{a012} and \eqref{14},
we have 
\begin{multline}
\bE_x \sbra{\int_0^\infty e^{-qt}f^\prime(U^{b^\ast}_t)\diff t}- |\beta|(1-e^{-q\eta}) {- \delta_1(\eta)}\\
\leq\frac{v_{b^\ast}(x+\varepsilon)-v_{b^\ast}(x)}{\varepsilon} \leq \bE_x \sbra{\int_0^\infty e^{-qt}f^\prime(U^{[\varepsilon], b^\ast}_t )\diff t}+|\beta|(1-e^{-q\eta}) - \delta_2(\eta),
\label{35}
\end{multline}
where
\begin{align}
\delta_1(\eta) &:= \sum_{k\in\bN}\bE_x\sbra{ \frac{\varepsilon^{(k)}_{[\eta]} }{\varepsilon}\rbra{\int_{S^{(k)}_{[\eta]}}^{T^{(k)}_{[\eta]}} e^{-qt} \left[ f^\prime(U^{[\varepsilon], b^\ast}_t) \right]^+ \diff t
+e^{-qT^{(k)}_{[\eta]}}\rbra{ R^{(q)}(U^{[\varepsilon], b^\ast}_{T^{(k)}_{[\eta]}}) -\beta} }}, \\
\delta_2(\eta) &:=\sum_{k\in\bN}\bE_x\sbra{ \frac{\varepsilon^{(k)}_{[\eta]} }{\varepsilon}\rbra{-\int_{S^{(k)}_{[\eta]}}^{T^{(k)}_{[\eta]}} e^{-qt} \left[ f^\prime(U^{b^\ast}_t) \right]^- \diff t
+e^{-qT^{(k)}_{[\eta]}}\rbra{ R^{(q)}(U^{b^\ast}_{T^{(k)}_{[\eta]}}) -\beta} }}.
\end{align}


We now take $\eta \downarrow 0$ in \eqref{35}. To this end, we show the following two lemmas.

\begin{lemma} \label{lemma_delta_2_limit}
We have that
\begin{enumerate}
\item $\varliminf_{\eta \downarrow 0} \delta_{2}(\eta)
\geq 0$;
\item $\varlimsup_{\eta \downarrow 0} \delta_{1}(\eta)\leq 0$.
\end{enumerate}
\end{lemma}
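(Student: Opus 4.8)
The plan is to prove both items simultaneously, exploiting their symmetry: $\delta_2$ involves $U^{b^\ast}$, $[f']^-$ and the inequality $R^{(q)}(U^{b^\ast}_{S^{(k)}_{[\eta]}})\le\beta$ (valid since $U^{b^\ast}_{S^{(k)}_{[\eta]}}\le b^\ast$, $R^{(q)}$ is non-decreasing, and $R^{(q)}(b^\ast)=\rho(b^\ast)=\beta$ by Lemma~\ref{lemma_about_b_star}(ii)), whereas $\delta_1$ involves $U^{[\varepsilon],b^\ast}$, $[f']^+$ and the reversed inequality $R^{(q)}(U^{[\varepsilon],b^\ast}_{S^{(k)}_{[\eta]}})\ge\beta$ (valid since $U^{[\varepsilon],b^\ast}_{S^{(k)}_{[\eta]}}\ge b^\ast$). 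I describe~(i); (ii) follows by the obvious substitutions. First I would split $\delta_2(\eta)=-I(\eta)+J(\eta)$, where
\begin{equation*}
I(\eta):=\sum_{k\in\bN}\bE_x\!\left[\tfrac{\varepsilon^{(k)}_{[\eta]}}{\varepsilon}\int_{S^{(k)}_{[\eta]}}^{T^{(k)}_{[\eta]}}e^{-qt}[f'(U^{b^\ast}_t)]^-\,\diff t\right],\qquad
J(\eta):=\sum_{k\in\bN}\bE_x\!\left[\tfrac{\varepsilon^{(k)}_{[\eta]}}{\varepsilon}e^{-qT^{(k)}_{[\eta]}}\big(R^{(q)}(U^{b^\ast}_{T^{(k)}_{[\eta]}})-\beta\big)\right];
\end{equation*}
it then suffices to show $I(\eta)\to0$ and $\varliminf_{\eta\downarrow0}J(\eta)\ge0$.

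For $I(\eta)$: on each interval $[S^{(k)}_{[\eta]},T^{(k)}_{[\eta]}]$ the rate difference obeys $0\le l^{[\varepsilon],b^\ast}_t-l^{b^\ast}_t\le\alpha$ — immediate from the explicit form of $h^{b^\ast}$, from $U^{[\varepsilon],b^\ast}_t\ge U^{b^\ast}_t$ (Lemma~\ref{lemma_property_diff_process}(a)), and, in Case~2, from $0\le\delta\le\alpha$ — so $\varepsilon^{(k)}_{[\eta]}\le\alpha\eta$. Since the intervals are pairwise disjoint, $\sum_{k}\int_{S^{(k)}_{[\eta]}}^{T^{(k)}_{[\eta]}}e^{-qt}|f'(U^{b^\ast}_t)|\,\diff t\le\int_0^\infty e^{-qt}|f'(U^{b^\ast}_t)|\,\diff t$, whose $\bE_x$-expectation is finite by the sandwich $X^{(\alpha)}_t\le U^{b^\ast}_t\le X_t$, the monotonicity and polynomial growth of $f'$ (Remark~\ref{30}(i)), and Lemma~\ref{Lem302}. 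Hence $0\le I(\eta)\le\frac{\alpha\eta}{\varepsilon}\,\bE_x[\int_0^\infty e^{-qt}|f'(U^{b^\ast}_t)|\,\diff t]\to0$.

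The heart of the proof is $\varliminf_{\eta\downarrow0}J(\eta)\ge0$. I would use that $M_t:=\int_0^te^{-qs}f'(U^{b^\ast}_s)\,\diff s+e^{-qt}R^{(q)}(U^{b^\ast}_t)$ is a closed $L^1$-martingale — this follows from the strong Markov property of $U^{b^\ast}$ and the integrability estimate of Lemma~\ref{Lem302} — so that optional sampling between $S^{(k)}_{[\eta]}$ and $T^{(k)}_{[\eta]}$ gives $\bE_x[e^{-qT^{(k)}_{[\eta]}}R^{(q)}(U^{b^\ast}_{T^{(k)}_{[\eta]}})\mid\mathcal{F}_{S^{(k)}_{[\eta]}}]=e^{-qS^{(k)}_{[\eta]}}R^{(q)}(U^{b^\ast}_{S^{(k)}_{[\eta]}})-\bE_x[\int_{S^{(k)}_{[\eta]}}^{T^{(k)}_{[\eta]}}e^{-qs}f'(U^{b^\ast}_s)\,\diff s\mid\mathcal{F}_{S^{(k)}_{[\eta]}}]$. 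Writing $f'=[f']^+-[f']^-$, the $[f']^-$-part of this correction (which is $+I(\eta)$ after summation) cancels against $-I(\eta)$, and one is left with a sum built from the nonpositive terms $R^{(q)}(U^{b^\ast}_{S^{(k)}_{[\eta]}})-\beta$, an $O(\eta)$ term $\beta(e^{-qS^{(k)}_{[\eta]}}-e^{-qT^{(k)}_{[\eta]}})$, and an $O(\eta)$ nonpositive $[f']^+$-term. The nonnegativity of the $\eta\downarrow0$ limit is then obtained from the path structure near $b^\ast$: $\varepsilon^{(k)}_{[\eta]}>0$ forces $U^{b^\ast}$ to visit the band $(b^\ast-\varepsilon,b^\ast]$ during $[S^{(k)}_{[\eta]},T^{(k)}_{[\eta]}]$, which, together with right-continuity of $U^{b^\ast}$, continuity of $R^{(q)}$ (Remark~\ref{30}(iii)), and the fact that the length of the interval is $\eta$, controls $R^{(q)}(U^{b^\ast}_{S^{(k)}_{[\eta]}})-\beta$ in the limit; this sign bookkeeping is the delicate part of the argument.

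The main obstacle is twofold, and it is precisely where the problem departs from the reflected-process arguments of \cite{noba2022stochastic,NobYam2022}. First, $\varepsilon^{(k)}_{[\eta]}$ is $\mathcal{F}_{T^{(k)}_{[\eta]}}$-measurable but not $\mathcal{F}_{S^{(k)}_{[\eta]}}$-measurable, so it cannot be moved inside the conditional expectation above; I would handle this by truncating the sum to $k\le K$ — the remaining tail being bounded, uniformly in $\eta$, by $e^{-qT^{(K)}_{[\eta]}}$ times a quantity of finite $\bE_x$-expectation — then replacing $\varepsilon^{(k)}_{[\eta]}$ by the deterministic bound $\alpha\eta$ up to a nonnegative error, passing to the limit $\eta\downarrow0$, and only afterwards letting $K\to\infty$. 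Second, unlike reflected paths, the refracted paths $U^{b^\ast}$ and $U^{[\varepsilon],b^\ast}$ need not coalesce in finite time, so $\{[S^{(k)}_{[\eta]},T^{(k)}_{[\eta]}]\}_{k\in\bN}$ genuinely comprises countably many intervals accumulating at $\infty$; this is exactly why a discount-controlled truncation is required in place of a finite-horizon argument. Item~(ii) is proved identically after replacing $U^{b^\ast}$ by $U^{[\varepsilon],b^\ast}$, $[f']^-$ by $[f']^+$, $\le$ by $\ge$, and $\varliminf$ by $\varlimsup$.
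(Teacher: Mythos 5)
Your decomposition $\delta_2=-I+J$ and the estimate $I(\eta)\le\frac{\alpha\eta}{\varepsilon}\,\bE_x\big[\int_0^\infty e^{-qt}|f'(U^{b^\ast}_t)|\,\diff t\big]\to0$ are fine and correspond to the easy ($Z^{(1)}$-type) term in the paper. The genuine gap is in the part you yourself label ``the delicate part'': showing that the nonpositive terms $R^{(q)}(U^{b^\ast}_{S^{(k)}_{[\eta]}})-\beta$ (equivalently, the negative parts $[R^{(q)}(U^{b^\ast}_{T^{(k)}_{[\eta]}})-\beta]^-$) are asymptotically negligible as $\eta\downarrow0$ \emph{is} the lemma, and the only control you offer --- that $\varepsilon^{(k)}_{[\eta]}>0$ forces $U^{b^\ast}$ into the band $(b^\ast-\varepsilon,b^\ast]$ --- cannot deliver it. Membership in the band only gives $R^{(q)}(U^{b^\ast}_{S^{(k)}_{[\eta]}})-\beta\ge R^{(q)}(b^\ast-\varepsilon)-\beta$, a fixed negative quantity that does not shrink with $\eta$ (recall $\varepsilon$ is held fixed in this lemma), and since $\sum_k\varepsilon^{(k)}_{[\eta]}/\varepsilon$ can be of order one, your sketch yields at best $\varliminf_{\eta\downarrow0}\delta_2(\eta)\ge-(\beta-R^{(q)}(b^\ast-\varepsilon))$, strictly weaker than the claimed $\ge 0$. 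Note also that the inequality $R^{(q)}(U^{b^\ast}_{S^{(k)}_{[\eta]}})\le\beta$ points in the useless direction for (i): it says the problematic terms are nonpositive, whereas you must show they vanish. The paper's mechanism is different and is where all the work goes: it rewrites each endpoint term as a time average over $[S^{(k)}_{[\eta]},T^{(k)}_{[\eta]}]$, bounds $[R^{(q)}(U^{b^\ast}_{T^{(k)}_{[\eta]}})-\beta]^-$ by $[R^{(q)}(\inf_{s\in[t,t+\eta]}U^{b^\ast}_s)-\beta]^-$ via monotonicity of $R^{(q)}$, dominates the resulting integrand by $[f'(X^{(\alpha)}_t)]^-+\frac{\alpha}{\varepsilon}[R^{(q)}(\underline{X}^{(\alpha)}_{t+\eta})-\beta]^-$ (whose integrability requires the polynomial growth of $R^{(q)}$ and the exponential-time computation \eqref{21}), applies the reverse Fatou lemma, and finally proves pointwise a.e.\ convergence to zero at non-jump times by a case analysis in which continuity of $R^{(q)}$ and $R^{(q)}(b^\ast)=\rho(b^\ast)=\beta$ enter; your optional-sampling identity for $M_t=\int_0^te^{-qs}f'(U^{b^\ast}_s)\diff s+e^{-qt}R^{(q)}(U^{b^\ast}_t)$ (which is indeed a closed martingale) does not substitute for any of these steps.

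A secondary but real problem is your fix for the measurability of $\varepsilon^{(k)}_{[\eta]}$: truncating at $k\le K$ and bounding the tail ``uniformly in $\eta$'' by $e^{-qT^{(K)}_{[\eta]}}$ times an integrable quantity, then sending $\eta\downarrow0$ before $K\to\infty$, does not work as stated. The only a priori lower bound is $T^{(K)}_{[\eta]}\ge K\eta$, so for fixed $K$ the factor $e^{-qT^{(K)}_{[\eta]}}$ is not small uniformly in $\eta$; moreover, as $\eta\downarrow0$ the times $T^{(K)}_{[\eta]}$ may accumulate at a finite time, so letting $K\to\infty$ afterwards need not kill the tail. The paper sidesteps truncation entirely by passing to the integral form $\bE_x[\int_0^\infty e^{-qt}G_t(\eta)\diff t]$ and running a single reverse-Fatou argument, and it never needs to condition at $S^{(k)}_{[\eta]}$, only to use the strong Markov property at $T^{(k)}_{[\eta]}$, where $\varepsilon^{(k)}_{[\eta]}$ \emph{is} measurable.
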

\begin{proof}
(i) Because $S^{(k)}_{[\eta]} +\eta = T^{(k)}_{[\eta]}$ and $R^{(q)}$ is non-decreasing,
\begin{align}
\delta_{2}(\eta) 
&=\sum_{k\in\bN}\bE_x\sbra{ \frac{\varepsilon^{(k)}_{[\eta]} }{\varepsilon}
\int_{S^{(k)}_{[\eta]}}^{T^{(k)}_{[\eta]}} \rbra{ - e^{-qt}\left[ f^\prime(U^{b^\ast}_t)\right]^- 
+ \frac{1}{\eta}
e^{-qT^{(k)}_{[\eta]}}\rbra{ R^{(q)}(U^{b^\ast}_{T^{(k)}_{[\eta]}}) -\beta} } \diff t }\\
&\geq - \bE_x\sbra{
\int_0^\infty  e^{-qt} G_t(\eta) \diff t }
, \label{18}
\end{align}
where
\begin{align}
G_t(\eta) &:=  \sum_{k\in\bN} 1_{(S^{(k)}_{[\eta]}, T^{(k)}_{[\eta]} )} {(t)}\rbra{\frac{\varepsilon^{(k)}_{[\eta]} }{\varepsilon} \left[ f^\prime(U^{b^\ast}_t) \right]^- + \frac{1}{\varepsilon}\frac{\varepsilon^{(k)}_{[\eta]} }{\eta} \left[ R^{(q)} \left( 
\inf_{s \in [t, t+\eta]}U^{b^\ast}_s \right) -\beta \right]^- } \\ &\geq 0.
\end{align}
In view of \eqref{epsilon_def} and the bound \eqref{l_bound},
$\varepsilon^{(k)}_{[\eta]} / \eta \leq \alpha$ for $k\in\bN$. In addition, by the convexity of $f$,  $R^{(q)}$ and $f'$ are non-decreasing and $X^{(\alpha)} \leq U^{b^*}$. Thus, $R^{(q)} \left( 
\inf_{s \in [t, t+\eta]}U^{b^\ast}_s \right) \geq R^{(q)} ( \inf_{s \in[0, t+\eta] }X^{(\alpha)}_s )$ and $f^\prime(U^{b^\ast}_t) \geq f^\prime (X^{(\alpha)}_t)$. 
Hence, using \eqref{epsilon_bound}, 
%
%
%
\begin{align}
G_t(\eta) \leq  [f^\prime (X^{(\alpha)}_t) ]^-+ \frac{\alpha}{\varepsilon} \left[R^{(q)} \left( \inf_{s \in[0, t+\eta] }X^{(\alpha)}_s \right) -\beta \right]^-. \label{19}
\end{align}
By Lemma \ref{Lem302} and since $f^\prime$ {has} polynomial growth,
we have
\begin{align}
\bE_x\sbra{\int_0^\infty e^{-qt} \left[ f^\prime (X^{(\alpha)}_t) \right]^- \diff t }< \infty. \label{20}
\end{align}
In addition, the function $R^{(q)}$ is of polynomial growth by the same argument as that of the proof of \cite[Lemma 3]{NobYam2022}. 
Thus, by \cite[Exercise 7.1(ii)]{Kyp2014},  we have, {with ${\bf{e}}_q$ an independent exponential random variable with parameter $q$,} with $\underline{X}^{(\alpha)}_t := \inf_{0 \leq s \leq t}X^{(\alpha)}_s$,
\begin{multline}
\begin{split}
&\bE_x\sbra{\int_0^\infty e^{-qt}  \left[R^{(q)} \left( \underline{X}^{(\alpha)}_{t+\eta} \right) - \beta \right]^- \diff t } \\
&=\frac{1}{q}\bE_x\sbra{ \left[R^{(q)} \left( \underline{X}^{(\alpha)}_{{\bf{e}}_q+\eta} \right) - \beta \right]^- }=\frac{1}{q}\bE_x\sbra{ \left[R^{(q)} \left( \underline{X}^{(\alpha)}_{{\bf{e}}_q} \right) - \beta \right]^-\Big{|}{\bf{e}}_q>\eta}  \\
&= \frac{e^{q \eta}}{q}\bE_x\sbra{ \left[R^{(q)} \left( \underline{X}^{(\alpha)}_{{\bf{e}}_q} \right) - \beta \right]^-; {\bf{e}}_q>\eta} \leq \frac{e^{q\eta}}{q}\bE_x\sbra{ \left[R^{(q)} \left(  \underline{X}^{(\alpha)}_{{\bf{e}}_q} \right) -\beta \right]^- } \\ &<\infty.
\end{split}
\label{21}
\end{multline}
By \eqref{19}, \eqref{20} and \eqref{21}, we can use reverse Fatou's lemma and have
\begin{multline}
\varlimsup_{\eta \downarrow 0}
\bE_x\sbra{
\int_0^\infty  e^{-qt} {G_t(\eta)}
\diff t } \leq
\bE_x\sbra{
\int_0^\infty  e^{-qt} \varlimsup_{\eta \downarrow 0} {G_t(\eta)}
\diff t }  \leq
\bE_x\sbra{
\int_0^\infty  e^{-qt} \varlimsup_{\eta \downarrow 0}
(Z_t^{(1)} (\eta) + Z_t^{(2)} (\eta))
\diff t },\label{22}
\end{multline}
where we define
\begin{align}
Z_t^{(1)} (\eta) &:= \sum_{k\in\bN} 1_{(S^{(k)}_{[\eta]}, T^{(k)}_{[\eta]} )} (t) \frac{\eta \alpha }{\varepsilon}\left[ f^\prime(U^{b^\ast}_t) \right]^-, \\
Z_t^{(2)} (\eta) &:= \sum_{k\in\bN} 1_{(S^{(k)}_{[\eta]}, T^{(k)}_{[\eta]} )} (t)\frac \alpha \varepsilon \left[ R^{(q)}\rbra{
\inf_{s \in [t, t+\eta]}U^{b^\ast}_s} -\beta \right]^-.
\end{align}

It is sufficient to prove that $Z_t^{(i)} (\eta) \to 0$ for $i=1,2$ as $\eta \downarrow 0$ for Lebesgue-a.e.\ $t \geq 0$.
First, we have 
\begin{align}
 0 \leq 
 Z_t^{(1)} (\eta)
\leq 
 \frac{\eta \alpha }{\varepsilon}\left[f^\prime(U^{b^\ast}_t) \right]^-  \xrightarrow{\eta \downarrow 0} 0. \label{31}
\end{align}
For the convergence of $Z_t^{(2)}$, it suffices to show for $t \geq 0$ at which $X$ does not jump because $X$ does not jump at a.e.-$t$ and neither does $U^{b^*}$. Thus, we fix $t$ at which $X$ does not jump.
\begin{enumerate}
\renewcommand{\labelenumi}{(\arabic{enumi})}
\item Suppose that $U^{b^\ast}_t> b^\ast$. 
Then, for small enough $\eta>0$, because $X$ is c\'adl\'ag (and so is $U^{b^*}$) we have $\inf_{s \in [t, t+\eta]}U^{b^\ast}_s >b^\ast$ and thus $ R^{(q)}\rbra{\inf_{s \in [t, t+\eta]}U^{b^\ast}_s} -\beta\geq R^{(q)}(b^*)-\beta= 0$ by  the fact that $R^{(q)}(b^*)=\rho(b^*)$ and Lemma \ref{lemma_about_b_star}(ii). Thus, $Z_t^{(2)} (\eta) \xrightarrow{\eta \downarrow 0} 0$. 
\item Suppose that $U^{b^\ast}_t< b^\ast$.
Since we are assuming $X$ does not have a jump at $t$ and because $X$ is c\'adl\'ag, for small $\eta>0$, we have 
$\sup_{s \in [t-\eta , t+\eta]}U^{b^\ast}_s<b^\ast$, which implies that $t\not\in(S^{(k)}_{[\eta]} , T^{(k)}_{[\eta]} )$ for all $k\in\bN$. Thus, $Z_t^{(2)} (\eta) \xrightarrow{\eta \downarrow 0} 0$. 
\item Suppose that $U^{b^\ast}_t= b^\ast$. Then, by Remark \ref{30}(iii), since $R^{(q)}$ is continuous by the dominated convergence theorem and Lemma \ref{lemma_property_diff_process}(a) and since the process $U^{b^\ast}$ has right continuous paths, $Z_t^{(2)} (\eta) \xrightarrow{\eta \downarrow 0} 0$. 
\end{enumerate}
This completes the proof of (i).

(ii) Again, because $S^{(k)}_{[\eta]} +\eta = T^{(k)}_{[\eta]}$ and $R^{(q)}$ is non-decreasing,
\begin{align}
\begin{split}
\delta_{1}(\eta) 
&=\sum_{k\in\bN}\bE_x\sbra{ \frac{\varepsilon^{(k)}_{[\eta]} }{\varepsilon}
\int_{S^{(k)}_{[\eta]}}^{T^{(k)}_{[\eta]}} \rbra{  e^{-qt}\left[ f^\prime(U^{[\varepsilon], b^\ast}_t)\right]^+ 
+ \frac{1}{\eta}
e^{-qT^{(k)}_{[\eta]}}\rbra{R^{(q)} \left(U^{[\varepsilon], b^\ast}_{T^{(k)}_{[\eta]}} \right) -\beta} } \diff t }\\
&\leq  \bE_x\sbra{
\int_0^\infty  e^{-qt} \bar{G}_t(\eta) \diff t }, \end{split} \label{33}
\end{align}
where
\begin{align}
\bar{G}_t(\eta) &:=  \sum_{k\in\bN} 1_{(S^{(k)}_{[\eta]}, T^{(k)}_{[\eta]} )} {(t)}\rbra{\frac{\varepsilon^{(k)}_{[\eta]} }{\varepsilon} \left[ f^\prime(U^{[\varepsilon], b^\ast}_t) \right]^+ + \frac{1}{\varepsilon}\frac{\varepsilon^{(k)}_{[\eta]} }{\eta} \left[ R^{(q)}\left( 
\sup_{s \in [t, t+\eta]}U^{[\varepsilon], b^\ast}_s \right) -\beta \right]^+ } \\ &\geq 0.
\end{align}
By the same argument as (i) with \eqref{19}, \eqref{20} and \eqref{21} with $G_t$, $[\cdot]^-$ and infimum replaced by  $\bar{G}_t$, $[\cdot]^+$ and supremum, respectively, we can again use the reverse Fatou's lemma to obtain
\begin{multline}
\varlimsup_{\eta \downarrow 0}
\bE_x\sbra{
\int_0^\infty  e^{-qt} {\bar{G}_t(\eta)}
\diff t } \leq
\bE_x\sbra{
\int_0^\infty  e^{-qt} \varlimsup_{\eta \downarrow 0} {\bar{G}_t(\eta)}
\diff t }  \leq
\bE_x\sbra{
\int_0^\infty  e^{-qt} \varlimsup_{\eta \downarrow 0}
(\overline{Z}_t^{(1)} (\eta) + \overline{Z}_t^{(2)} (\eta))
\diff t },
\label{32}
\end{multline}
where
\begin{align}
\overline{Z}_t^{(1)} (\eta) &:= \sum_{k\in\bN} 1_{(S^{(k)}_{[\eta]}, T^{(k)}_{[\eta]} )} (t) \frac{\eta \alpha }{\varepsilon} [f^\prime(U^{[\varepsilon], b^\ast}_t) ]^+, \\
\overline{Z}_t^{(2)} (\eta) &:= \sum_{k\in\bN} 1_{(S^{(k)}_{[\eta]}, T^{(k)}_{[\eta]} )} (t) \frac{\alpha}{\varepsilon} \left[ R^{(q)}\rbra{
\inf_{s \in [t, t+\eta]}U^{[\varepsilon], b^\ast}_s} -\beta \right]^+.
\end{align}
As in the case of \eqref{31}, $\overline{Z}_t^{(1)} (\eta) \xrightarrow{\eta \downarrow 0} 0$ for all $t \geq 0.$ For the convergence of $\overline{Z}_t^{(2)} (\eta) $, results analogous to (1)-(3) in (i) for $U^{[\varepsilon], b^\ast}$ instead of $U^{b^\ast}$ clearly hold (with replacing $[\cdot]^-$ with $[\cdot]^+$, infimum with supremum,  ``$>b^\ast$" with  ``$<b^\ast$'' in (1), ``$<b^\ast$'' with ``$>b^\ast$'' in (2).) Thus, $\overline{Z}_t^{(2)} (\eta) \xrightarrow{\eta \downarrow 0} 0$ for Lebesgue-a.e.\ $t \geq 0$ (at which $t$ does not jump). This completes the proof of (ii) in view of \eqref{33}.
\end{proof}

From \eqref{35} with $\eta \downarrow 0$ and applying Lemma \ref{lemma_delta_2_limit}, we have, for $x\in\bR$, 
\begin{multline}
\bE_x \sbra{\int_0^\infty e^{-qt}f^\prime(U^{b^\ast}_t )\diff t}
\leq\frac{v_{b^\ast}(x+\varepsilon)-v_{b^\ast}(x)}{\varepsilon} \\ \leq \bE_x \sbra{\int_0^\infty e^{-qt}f^\prime(U^{[\varepsilon], b^\ast}_t )\diff t} 
\leq \bE_x \sbra{\int_0^\infty e^{-qt}f^\prime(U^{b^\ast}_t +\varepsilon)\diff t},  
\label{37}
\end{multline}
where in the last inequality we used Lemma \ref{lemma_property_diff_process}(a). 
By taking the limit as $\varepsilon \downarrow 0$, with Lemma \ref{Lem302}, \eqref{36} and the dominated convergence theorem, the right-hand derivative becomes
\begin{align}
v_{b^\ast}^{\prime, +}(x)=\bE_x \sbra{\int_0^\infty e^{-qt}f^\prime(U^{b^\ast}_t )\diff t}, \quad x\in\bR.
\end{align}
The function $v_{b^\ast}$ is continuous on $\bR$ by \eqref{37} and its right derivative is non-decreasing. 
Thus, by \cite[Theorem 6.4]{HirLem2001}, the function $v_{b^\ast}$ is convex.


For $\varepsilon \in \mathbb{R}$, we have $|U^{[\varepsilon],b^\ast}_t - U^{b^\ast}_t| \leq |\varepsilon|$ by Lemma \ref{lemma_property_diff_process}(a) and \cite[Lemma D.1]{NOBA2023174}.
Because $f'$ is non-decreasing and continuous, monotone convergence gives  
\begin{align}
v_{b^\ast}^{\prime, +}(x+\varepsilon)-v_{b^\ast}^{\prime, +}(x)=\bE_x \sbra{\int_0^\infty e^{-qt}\left(f^\prime(U^{[\varepsilon],b^\ast}_t ) - f^\prime(U^{b^\ast}_t ) \right) \diff t} \xrightarrow{\varepsilon \to 0} 0, \quad x\in\bR. \label{jlk}
\end{align}
This,  together with the convexity of $v_{b^\ast}$, shows that $v_{b^\ast}^{\prime, +}$ is in fact a derivative and is continuous.
%
This completes the proof of Theorem \ref{density_v_b_star} when $b^* \in \R$.

%


\subsection{For the case $b^* = -\infty, +\infty$}

Suppose $b^* = - \infty$. Then, for $\varepsilon\in\R$,
\begin{align}
\frac{v_{b^*}(x+\varepsilon)-v_{b^*}(x)}{\varepsilon}=\E\left[\int_0^{\infty}e^{-qt}\frac{f(x+X^{(\alpha)}_t+\varepsilon)-f(x+X^{(\alpha)}_t)}{\varepsilon}\diff t\right]& \\ \xrightarrow{\varepsilon \downarrow 0} \E_x&\left[\int_0^{\infty}e^{-qt}f'(X^{(\alpha)}_t)\diff t\right],
\end{align}
where the limit holds by monotone convergence. The proof for the case $b^*=+\infty$ is similar, with $X^{(\alpha)}$ replaced with $X$.
%

{As in the case of $b^* \in (-\infty, \infty)$, the right-hand derivative is in fact a derivative and it is continuous.  In addition, the derivative is monotone and $v_{b^*}$ is thus convex.}

%


\appendix
\section{Proof of Lemma \ref{Lem402}}\label{app_lemma}


We only  provide the proof for the case $b^{*} \in (-\infty, \infty)$; the cases $b^* = \pm \infty$  are simplier and follow verbatim by replacing the resolvent measure \eqref{def_resolvent} with those for \lev processes $X$ and $X^{(\alpha)}$.

Suppose $X$ satisfies {that} $\Pi (0, \infty) <\infty $.  This is Case 1 in Section \ref{SubSec202} because $X$ is of unbounded variation.
Then, we can write it as 
\begin{align}
	X_t ={X^{\textrm{SN}}_t} + \sum_{n=1}^{N_t} J_n, \quad t \geq 0,
\end{align} 
where ${X^{\textrm{SN}}_t}= (X^{\textrm{SN}}_t)_{t\geq 0}$ is a spectrally negative L\'evy process with unbounded variation paths, $N=(N_t)_{t\geq 0}$ is {an} independent Poisson process with intensity $\Pi(0, \infty)$ and ${\{ J_n \}}_{ n \in \N}$ is a sequence of i.i.d.\ random variables {with} distribution $\Pi(\cdot \cap (0, \infty))/\Pi(0, \infty)$. 
{For $\theta \geq 0$,} let $W^{(\theta)}: \R \to \R$ {denote the} $\theta$-scale function of ${X^{\textrm{SN}}}$ and  $\mathbb{W}^{(\theta)}: \R \to \R$ {denote the} $\theta$-scale function of the drift-changed spectrally negative \lev process $(X^{\textrm{SN}}_t - \alpha t)_{t \geq 0}$. 
For the definition and important properties of the scale functions, see, e.g., \cite[Section 8]{Kyp2014}. 
In particular, it is known as in Lemma 8.2 of \cite{Kyp2014} that $W^{(\theta)}$ and  $\mathbb{W}^{(\theta)}$ are continuous on $\mathbb{R}$ and $C^1$ on $\mathbb{R} \backslash \{0\}$ because $X$ is assumed to be of unbounded variation.


Let ${U^{\textrm{SN}, b^*}}=(U^{\textrm{SN}, b^*}_t)_{t\geq 0}$ be the {associated} refracted L\'evy process of $X^{\textrm{SN}}$ at $b^*$, i.e. the solution to the SDE
\[
{U^{\textrm{SN}, b^*}_t}= {X^{\textrm{SN}}_t} - \alpha\int_0^t1_{(b^*,\infty)}(U_s^{\textrm{SN},b^*})\diff s,\qquad t \geq 0.
\]
As shown in \cite{KypLoe2010}, the solution exists and is unique. 

In addition, we define, for any $g: \mathbb{R} \to \mathbb{R}$, 
\begin{align}
	{H^{\textrm{SN}, (b^*, q)}_g}(x):=\E_x \left[\int_0^{\infty}e^{-q t} g({U^{\textrm{SN}, b^*}_t)} \diff t \right], \quad q>0 , x\in\bR,  
\end{align} 
which, by \cite[Theorem 6]{KypLoe2010}, can be written, if $|H^{\textrm{SN}, (b^*, q)}_g(x)|<\infty$,
as
\begin{align}
	\E_x \left[\int_0^{\infty }e^{-q t} g({U^{\textrm{SN}, b^*}_t)} \diff t \right]
	= \int_{-\infty}^\infty g(y)R^{(q)}(x,y)\diff y, 
\end{align}
where, for all $x, y \in\R$,
\begin{align}\label{def_resolvent}
\begin{split}
R^{(q)}(x,y):&=\left(e^{\Phi(q)(x-b^\ast)}+\alpha\Phi(q)e^{-\Phi(q)b^*}1_{\{x>b^\ast \}}\int_{b^*}^xe^{\Phi(q)z}\mathbb{W}^{(q)}(x-z)\diff z\right)\\& \qquad \qquad \times \frac{\varphi(q)-\Phi(q)}{\alpha\Phi(q)}e^{\varphi(q)b^*}\int_{b^*}^{\infty}e^{-\varphi(q)z}W^{(q)\prime}(z-y)\diff z \\
&-\left(W^{(q)}(x-y)+\alpha1_{\{x>b^*\}}\int_{b^*}^x\mathbb{W}^{(q)}(x-z)W^{(q)\prime}(z-y)\diff z\right).
\end{split}
\end{align}
Here, $\Phi$ and $\varphi$ are right inverses of Laplace exponents of $X^{\text{SN}}$ and $(X^{\textrm{SN}}_t - \alpha t)_{t \geq 0}$, respectively (see, e.g., \cite[Section 8]{Kyp2014}).

Define for $x\in\R$ and $q>0$, $H^{(b^*, q)}_{f'}(x)=\E_x\left[ \int_0^\infty e^{-qt}f'(U_t^{b^*})\diff t\right]$, where $U^{b^*}=(U_t^{b^*})_{t\geq0}$ is the refracted process of $X$ at $b^*$. Thus, by an application of the Markov property we have 
\begin{align}
&v_{b^*}'(x) = H^{(b^*, q)}_{f'}(x) \\
	&={H^{\textrm{SN}, (b^*, q)}_{f'}}(x) -\E_x \left[e^{-q T^{N}}{H^{\textrm{SN}, (b^*, q)}_{f'}} (U^{\textrm{SN}, b^*}_{T^{N}})\right]
+\E_x  \left[ e^{-q T^{N}}{H^{(b^*, q)}_{f'}(U^{\textrm{SN}, b^*}_{T^{N}}+J_1)} \right]\\
	&={H^{\textrm{SN}, (b^*, q)}_{f'}}(x)
	-\Pi(0, \infty) {H^{\textrm{SN}, (b^*, \hat{q})}_{H^{\textrm{SN}, (b^*, q)}_{f'}}}(x)
	+\int_{(0, \infty)} {H^{\textrm{SN}(b^*, \hat{q})}_{H^{ (b^*, q)}_{f'}(\cdot + u) }}(x)\Pi(\diff u), 
\end{align}
where $T^{N}$ is the first jump time of the process $N$ and $\hat{q}:=q+\Pi(0, \infty)$.
Because $W^{(q)}(0)=0$ (as $X$ is of unbounded variation),
for a function $g\in C^1 (\bR)$, we have 
\begin{align}
	{H^{\textrm{SN}, ({b^\ast}, q)}_g} (x)=
	\int_{b^\ast}^x \int_{{-\infty}}^\infty g(y) \partial_1R^{(q)}(z,y)\diff y \diff z+{H^{\textrm{SN}, ({b^\ast}, q)}_g} (b^\ast), \quad x \in \R,
\end{align}
where 
\begin{align}
\partial_1R^{(q)}(x,y) &:=\left(\Phi(q)e^{\Phi(q)(x-b^*)}+\alpha\Phi(q)e^{-\Phi(q)b^*}1_{\{x>b^*\}}\int_{b^*}^xe^{\Phi(q)z}\mathbb{W}^{(q)\prime}(x-z)\diff z\right)\notag\\& \qquad \qquad \times \frac{\varphi(q)-\Phi(q)}{\alpha\Phi(q)}e^{-\varphi(q)b^*}\int_{b^*}^{\infty}e^{-\varphi(q)z}W^{(q)\prime}(z-y)\diff z\notag\\
&-\left(W^{(q)\prime}(x-y)+\alpha1_{\{x>b^*\}}\int_{b^*}^x\mathbb{W}^{(q)\prime}(x-z)W^{(q)\prime}(z-y)\diff z\right).
\end{align}
Hence, the function $v_{b^*}'(x)$ has a density 
\begin{align}\label{sec_der_sn}
\begin{split}
	v_{b^*}''(x):=&
	\int_{-\infty}^\infty f'(y) \partial_1R^{(q)}(x,y)\diff y-\Pi(0, \infty)	\int_{-\infty}^\infty {H^{\textrm{SN}, (b^*, q)}_{f'}}(y) \partial_1R^{(\hat{q})}(x,y)\diff y\\
	&+	\int_{-\infty}^\infty \int_{(0,\infty)} {H^{ (b^*, q)}_{f'}}(y+u)\Pi(\diff u) \partial_1R^{(\hat{q})}(x,y)\diff y
	,\qquad x\in\R.
\end{split}
\end{align}

The right-hand side of \eqref{sec_der_sn} is well-defined by the following lemma.

\begin{lemma} The following holds for any $q > 0$.
(i) For any non-decreasing function $g$ of polynomial growth, we have
\begin{align}\label{integrability}
\int_{-\infty}^{\infty}|g(y)|\partial_1R^{(q)}(x,y) \diff y <\infty.
\end{align}
(ii) $y \mapsto  \int_{(0,\infty)} {H^{ (b^*, q)}_{f'}}(y+u)\Pi(\diff u)$ is of polynomial growth.
\end{lemma}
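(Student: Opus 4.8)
The plan is to establish (ii) by a soft pathwise sandwich, and (i) from the explicit formula for $\partial_1R^{(q)}$ together with the exponential asymptotics of the scale functions. \emph{For (ii)}, fix $y\in\R$ and $u>0$; working under $\bP_{y+u}$, Remark \ref{remark_X_alpha} gives $X^{(\alpha)}_t\le U^{b^*}_t\le X_t$ for all $t\ge0$ a.s.\ (the refraction rate taking values in $[0,\alpha]$), and since $f'$ is non-decreasing (by convexity of $f$) and of polynomial growth (Remark \ref{30}(i)), integrating with discount and invoking Lemma \ref{Lem302} shows that $H^{(b^*,q)}_{f'}(y+u)$ lies between $\E_{y+u}[\int_0^\infty e^{-qt}f'(X^{(\alpha)}_t)\,\diff t]$ and $\E_{y+u}[\int_0^\infty e^{-qt}f'(X_t)\,\diff t]$, both of polynomial growth in $y+u$, so $|H^{(b^*,q)}_{f'}(y+u)|\le C(1+|y|^M+|u|^M)$ for some $C,M>0$. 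Integrating against $\Pi$ over $(0,\infty)$ and using $\Pi(0,\infty)<\infty$ together with $\int_{(0,1]}u^M\,\Pi(\diff u)\le\Pi(0,\infty)<\infty$ and $\int_{(1,\infty)}u^M\,\Pi(\diff u)<\infty$ (by Assumption \ref{Ass301}, since $u^M$ is dominated by $e^{\bar{\theta} u}$ for large $u$), one concludes that $y\mapsto\int_{(0,\infty)}H^{(b^*,q)}_{f'}(y+u)\,\Pi(\diff u)$ is of polynomial growth, which is (ii).

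\emph{For (i)}, fix $x\in\R$. In the formula defining $\partial_1R^{(q)}(x,y)$, call the product of the two bracketed factors the \emph{product term} and $W^{(q)\prime}(x-y)+\alpha1_{\{x>b^*\}}\int_{b^*}^x\mathbb{W}^{(q)\prime}(x-z)W^{(q)\prime}(z-y)\,\diff z$ the \emph{subtracted term}, and split $\int_\R=\int_{\{|y-b^*|\le1\}}+\int_{\{y>b^*+1\}}+\int_{\{y<b^*-1\}}$. On the compact set $\{|y-b^*|\le1\}$, $\partial_1R^{(q)}(x,\cdot)$ is continuous (the scale functions are continuous and the integrals against $W^{(q)\prime},\mathbb{W}^{(q)\prime}$ depend continuously on $y$), hence bounded, and $|g|$ is bounded there, so this piece is finite. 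On $\{y>b^*+1\}$, since $W^{(q)}$ and $W^{(q)\prime}$ are supported on $[0,\infty)$, the subtracted term is bounded on $(b^*+1,\,x\vee b^*]$ and vanishes for $y>x\vee b^*$, while the product term is a factor bounded in $x$ times $\int_y^\infty e^{-\varphi(q)z}W^{(q)\prime}(z-y)\,\diff z=\alpha^{-1}e^{-\varphi(q)y}$ (substitute $w=z-y$ and use $\int_0^\infty e^{-\varphi(q)w}W^{(q)\prime}(w)\,\diff w=1/\alpha$); either way $\partial_1R^{(q)}(x,\cdot)$ is integrable against the polynomially growing $|g|$ on this set.

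The substantive piece is $\{y<b^*-1\}$. Here one uses that $e^{-\Phi(q)w}W^{(q)}(w)$ and $e^{-\Phi(q)w}W^{(q)\prime}(w)$ converge to finite positive limits, while $e^{-\varphi(q)w}\mathbb{W}^{(q)}(w)$ and $e^{-\varphi(q)w}\mathbb{W}^{(q)\prime}(w)$ stay bounded, as $w\to\infty$, together with $\varphi(q)>\Phi(q)$ (a standard property of the drift-changed spectrally negative process; see \cite{KypLoe2010,Kyp2014}). Substituting $w=z-y$ in the inner integrals of $\partial_1R^{(q)}(x,y)$ and letting $y\to-\infty$, a careful computation shows that the contributions of order $e^{-\Phi(q)y}$ from the product term and from the subtracted term cancel. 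This cancellation is essential, because $\partial_1R^{(q)}(x,\cdot)$ is not of one sign: each term on its own grows like $e^{\Phi(q)|y|}$ and is not integrable against any nonzero polynomial, whereas their difference is of strictly smaller exponential order and in fact decays exponentially as $y\to-\infty$ --- consistently with $R^{(q)}(x,\cdot)/q$ being the law of $U^{\textrm{SN},b^*}$ at an independent exponential time, which under Assumption \ref{Ass301} places exponentially little mass far below $x$. Multiplying the resulting estimate by $|g(y)|=O(|y|^M)$ leaves an integrable function on $(-\infty,b^*-1)$, completing (i).

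The main obstacle is precisely this last cancellation \emph{together with} control of the surviving remainder: cancelling the leading $e^{-\Phi(q)y}$ terms is not by itself enough, since a merely bounded remainder would still fail to be integrable against a polynomial. One must track the next-order corrections in the scale-function expansions --- the rate at which $e^{-\Phi(q)w}W^{(q)\prime}(w)$ and $e^{-\varphi(q)w}\mathbb{W}^{(q)\prime}(w)$ approach their limits, uniformly over the compact $z$-range entering the convolution term --- so as to certify that what survives after cancellation genuinely decays as $y\to-\infty$. All other estimates are routine.
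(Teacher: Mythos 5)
Part (ii) of your proposal is fine, and the sandwich $X^{(\alpha)}_t\le U^{b^*}_t\le X_t$ from Remark \ref{remark_X_alpha} combined with monotonicity of $f'$ and Lemma \ref{Lem302} is a clean (slightly more self-contained) route to the polynomial growth of $H^{(b^*,q)}_{f'}$ than the paper's citation of \cite{NobYam2022}; the integration against $\Pi$ under Assumption \ref{Ass301} matches the paper.

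Part (i), however, has a genuine gap, and you have located it yourself without closing it: on $\{y<b^*-1\}$ you assert that ``a careful computation shows'' the $e^{-\Phi(q)y}$ contributions of the product term and the subtracted term cancel, and that the remainder decays exponentially, but neither the cancellation nor the control of the remainder is carried out --- indeed your closing paragraph concedes that one must still ``track the next-order corrections'' uniformly over the convolution range, which is precisely the hard content of the estimate. Such second-order control is not free: the rate at which $e^{-\Phi(q)w}W^{(q)}(w)$ approaches its limit is itself a nontrivial quantity (it is, up to constants, the resolvent density of $X^{\textrm{SN}}$ at $-w$, cf.\ \cite{KuzKypRiv2012}), and claiming exponential decay of the remainder is both unproved and stronger than what is needed. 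The paper avoids asymptotics altogether by an exact algebraic identity: it writes $\partial_1R^{(q)}(x,y)=A^{(q)}(x,y)+\alpha\int_{b^*}^x\mathbb{W}^{(q)\prime}(x-z)A^{(q)}(z,y)\,\diff z$ and then re-expresses $A^{(q)}(x,y)$ as $\Phi(q)$ times a combination of $e^{\Phi(q)(x-b^*)}R^{(q)}(b^*,y)$, the resolvent density $u^{(q)}$ of $-X^{\textrm{SN}}$, and the resolvent density $v^{(q)}(x,y)=W^{(q)\prime}(x-y)/\Phi(q)-W^{(q)}(x-y)$ of $-\underline{X}^{\textrm{SN}}$; the cancellation you are hoping for is thereby performed exactly, and integrability of $|g|$ against each resolvent piece follows from Lemma \ref{Lem302}, Assumption \ref{Ass301} and \cite[Exercise 7.1(ii)]{Kyp2014}, with the convolution term handled by monotonicity and the bound $\int_{b^*}^x\mathbb{W}^{(q)\prime}(x-z)\,\diff z\le\mathbb{W}^{(q)}(x-b^*)$. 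To complete your argument you would either have to reproduce this identity (at which point you are doing the paper's proof) or supply genuine two-term asymptotics for $W^{(q)},\mathbb{W}^{(q)}$ with uniform error control, which your sketch does not provide.
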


\begin{proof}
(i) 
We have
\begin{align}
&\partial_1R^{(q)}(x,y) \\
&=\Phi(q)e^{\Phi(q)(x-b^\ast)}\frac{\varphi(q)-\Phi(q)}{\alpha\Phi(q)} e^{\varphi(q)b^*}\int_{b^*}^{\infty}e^{-\varphi(q)z}W^{(q)\prime}(z-y)\diff z-W^{(q)\prime}(x-y)\notag\\
&+\alpha\Phi(q)e^{-\Phi(q)b^*}1_{\{x>b^\ast \}}\int_{b^*}^xe^{\Phi(q)z}\mathbb{W}^{(q)\prime}(x-z)\diff z \\
& \qquad \qquad  \qquad  \times \frac{\varphi(q)-\Phi(q)}{\alpha\Phi(q)} e^{\varphi(q)b^*}\int_{b^*}^{\infty}e^{-\varphi(q)z}W^{(q)\prime}(z-y)\diff z\notag\\
&-\alpha1_{\{x>b^*\}}\int_{b^*}^x\mathbb{W}^{(q)\prime}(x-z)W^{(q)\prime}(z-y)\diff z\notag\\
&=A^{(q)}(x,y)+\alpha\int_{b^*}^x\mathbb{W}^{(q)\prime}(x-z)A^{(q)}(z,y) \diff z,
\end{align}	
where
\begin{align}
&A^{(q)}(x,y) \\ &:=\Phi(q)e^{\Phi(q)(x-b^\ast)}\frac{\varphi(q)-\Phi(q)}{\alpha\Phi(q)} e^{\varphi(q)b^*} \int_{b^*}^{\infty}e^{-\varphi(q)z}W^{(q)\prime}(z-y)\diff z-W^{(q)\prime}(x-y)\notag\\
&=\Phi(q)e^{\Phi(q)(x-b^\ast)}\left(\frac{\varphi(q)-\Phi(q)}{\alpha\Phi(q)} e^{\varphi(q)b^*}\int_{b^*}^{\infty}e^{-\varphi(q)z}W^{(q)\prime}(z-y)\diff z-W^{(q)}(b^*-y)\right)\notag\\
&+\Phi(q)(e^{\Phi(q)(x-b^\ast)}W^{(q)}(b^*-y)-W^{(q)}(x-y))-(W^{(q)\prime}(x-y)-\Phi(q)W^{(q)}(x-y))\notag\\
&=\Phi(q) \left[e^{\Phi(q)(x-b^\ast)}R^{(q)}(b^*,y)+ 
(u^{(q)}(x-y)-e^{\Phi(q)(x-b^\ast)}u^{(q)}(b^*-y)) 
-v^{(q)}(x,y) \right].
\end{align}
Here $u^{(q)}$ is the $q$-resolvent density of $-X^{\textrm{SN}}$ (see Theorem 2.7 of \cite{KuzKypRiv2012}) by \cite[Remark 3.3]{HerPerYam2016}  and  $v^{(q)}(x, y) := W^{(q)\prime}(x-y)/\Phi(q)-W^{(q)}(x-y)$ is the $q$-resolvent measure of $-\underline{X}^{\textrm{SN}}$ by \cite[Corollary 2.2]{KuzKypRiv2012} where $\underline{X}^{\textrm{SN}}$ is the running infimum of $X^{\textrm{SN}}.$

Let $g^+$ and $g^-$ be the positive and negative parts of $g$ so that $|g| = g^+ + g^-$.

By \eqref{def_resolvent} and also the facts that $u^{(q)}$ and $v^{(q)}$ are $q$-resolvent measures,
\begin{align}
\int_{\R}g^+(y)R^{(q)}(b^*,y)\diff y&=\mathbb{E}_{b^*}\left[\int_0^{\infty}e^{-qt}g^+(U^{\textrm{SN}, b^*}_t)\diff t\right], \\
\int_{\R}g^+(y)
v^{(q)}(x,y)
\diff y&=
\mathbb{E}_{x}\left[\int_0^{\infty}e^{-qt}g^+(-\underline{X}^{\textrm{SN}}_t)\diff t\right], 
\end{align}
and 
\begin{multline}
\int_{\R}g^+(y)(u^{(q)}(x-y)-e^{\Phi(q)(x-b^\ast)}u^{(q)}(b^*-x)) \diff y \\=\mathbb{E}_{x}\left[\int_0^{\infty}e^{-qt}g^+(-X^{\textrm{SN}}_t)\diff t\right] -e^{\Phi(q)(x-b^\ast)}\mathbb{E}_{b^*}\left[\int_0^{\infty}e^{-qt}g^+(-X^{\textrm{SN}}_t)\diff t\right],
\end{multline}
all of which are finite by Assumptions  \ref{Ass301} and \ref{assump_f} (see also Lemma \ref{Lem302}). Substituting these,
	\begin{multline}\label{decom_1}
\int_{-\infty}^{\infty}g^+(y)\partial_1R^{(q)}(x,y) \diff y=(r_1^{(q)}(x)+r_2^{(q)}(x))+\alpha \int_{b^*}^x\mathbb{W}^{(q)\prime}(x-z)(r_1^{(q)}(z)+r_2^{(q)}(z))\diff z,
	\end{multline}
where for $x\in\R$ 
\begin{align}
r^{(q)}_1(x)&=\Phi(q)e^{\Phi(q)(x-b^*)}\mathbb{E}_{b^*}\left[\int_0^{\infty}e^{-qt}\left(g^+(U^{\textrm{SN}, b^*}_t)-g^+(-X^{\textrm{SN}}_t)\right)\diff t \right] \\ &-
\Phi(q)\mathbb{E}_{x}\left[\int_0^{\infty}e^{-qt}g^+(-\underline{X}^{\textrm{SN}}_t)\diff t\right],\notag\\
r^{(q)}_2(x)&=\Phi(q)\E_{x}\left[\int_0^{\infty}e^{-qt}g^+(-X^{\textrm{SN}}_t)\diff t\right].
\end{align}
By the monotonicity of $g^+$, the mapping $x\mapsto r^{(q)}_1(x)$ (resp. $x\mapsto r^{(q)}_2(x)$) is non-decreasing  (resp. non-increasing), therefore by \cite[Remark 3.3]{HerPerYam2016} we obtain, for $x \in \R$,
\begin{align}
\int_{-\infty}^{\infty}g^+(y)\partial_1R^{(q)}(x,y) \diff y\leq (r_1^{(q)}(x)+r_2^{(q)}(x))+\alpha (r_1^{(q)}(x)+r_2^{(q)}(b))\mathbb{W}^{(q)}(x-b^*). 
\end{align}
Using Lemma \ref{Lem302} together with \cite[Exercise 7.1(ii)]{Kyp2014} gives that 
$r_i^{(q)}(z)<\infty$ for $z\in\R$ and $i=1,2$, and hence
 $\int_{-\infty}^{\infty}g^+(y)\partial_1R^{(q)}(x,y) \diff y<\infty$. By a similar argument,  $\int_{-\infty}^{\infty}g^-(y)\partial_1R^{(q)}(x,y) \diff y<\infty$ and hence, we obtain \eqref{integrability}.


(ii) Because ${H^{\textrm{SN}, (b^*, q)}_{f'}}$ is of polynomial growth by the proof of \cite[Lemma 3]{NobYam2022}, for some $k_1$, $k_2$ and $N$, 
\begin{multline}
\left|\int_{(0,\infty)} {H^{ (b^*, q)}_{f'}}(y+u)\Pi(\diff u)\right| \leq \int_{(0,\infty)} |{H^{ (b^*, q)}_{f'}}(y+u)|\Pi(\diff u)\\ \leq \int_{(0,\infty)}\left(k_1+k_2|u+y|^N\right)\Pi(\diff u) \leq k_1 \Pi(0,\infty)+k_2\sum_{n=0}^N \binom N n 
\int_{(0,\infty)}u^{N-n}\Pi(\diff u)y^{n},
\end{multline}
which implies that $\int_{(0,\infty)} {H^{ (b^*, q)}_{f'}}(y+u)\Pi(\diff u)$ is of polynomial growth. 
\end{proof}

Therefore, $v_{b^*}''(x)$ defined by \eqref{sec_der_sn}, is well-defined for every $x\in\R$.
In addition, by dominated convergence, $v_{b^*}''(x)$ is continuous.

\par In the same way, we can prove that the case with $\Pi(-\infty , 0)<\infty$ satisfies Assumption \ref{Ass401a}.

\bibliographystyle{plain}
\bibliography{NOBA_references_05} 

\begin{thebibliography}{10}

\bibitem{boxma2017reinsurance}
Onno~J Boxma, Esther Frostig, and David Perry.
\newblock A reinsurance risk model with a threshold coverage policy: The
  {G}erber--{S}hiu penalty function.
\newblock {\em Journal of Applied Probability}, 54(1):267--285, 2017.

\bibitem{carr2002fine}
P.~Carr, H.~Geman, D.~B. Madan, and M.~Yor.
\newblock The fine structure of asset returns: An empirical investigation.
\newblock {\em The Journal of Business}, 75(2):305--332, 2002.

\bibitem{cheung2016joint}
Eric~CK Cheung and Haibo Liu.
\newblock On the joint analysis of the total discounted payments to
  policyholders and shareholders: threshold dividend strategy.
\newblock {\em Annals of Actuarial Science}, 10(2):236--269, 2016.

\bibitem{CzaPerYam2018}
I.~Czarna, J.~L. P\'{e}rez, and K.~Yamazaki.
\newblock Optimality of multi-refraction control strategies in the dual model.
\newblock {\em Insurance: Mathematics \& Economics}, 83:148--160, 2018.

\bibitem{HerPerYam2016}
D.~Hern\'andez-Hern\'andez, J.~L. P\'erez, and K.~Yamazaki.
\newblock Optimality of refraction strategies for spectrally negative {L}\'evy
  processes.
\newblock {\em SIAM Journal on Control and Optimization}, 54(3):1126--1156,
  2016.

\bibitem{HirLem2001}
J.~B. Hiriart-Urruty and C.~Lemar\'{e}chal.
\newblock {\em Fundamentals of convex analysis}.
\newblock Grundlehren Text Editions. Springer-Verlag, Berlin, 2001.

\bibitem{KuzKypRiv2012}
A.~Kuznetsov, A.~E. Kyprianou, and V.~Rivero.
\newblock The theory of scale functions for spectrally negative {L}\'evy
  processes.
\newblock In {\em {L}\'evy matters {II}}, volume 2061 of {\em Lecture Notes in
  Math.}, pages 97--186. Springer, Heidelberg, 2012.

\bibitem{Kyp2014}
A.~E. Kyprianou.
\newblock {\em Fluctuations of {L}\'evy processes with applications}.
\newblock Universitext. Springer, Heidelberg, second edition, 2014.

\bibitem{KypLoePer2012}
A.~E. Kyprianou, R.~Loeffen, and J.~L. P\'{e}rez.
\newblock Optimal control with absolutely continuous strategies for spectrally
  negative {L}\'{e}vy processes.
\newblock {\em Journal of Applied Probability}, 49(1):150--166, 2012.

\bibitem{KypLoe2010}
A.~E. Kyprianou and R.~L. Loeffen.
\newblock Refracted {L}\'evy processes.
\newblock {\em Annales de l'Institut Henri Poincar\'e Probabilit\'es et
  Statistiques}, 46(1):24--44, 2010.

\bibitem{li2018weighted}
Bo~Li and Xiaowen Zhou.
\newblock On weighted occupation times for refracted spectrally negative
  {L}{\'e}vy processes.
\newblock {\em Journal of Mathematical Analysis and Applications},
  466(1):215--237, 2018.

\bibitem{li2019number}
Yanhong Li, Zbigniew Palmowski, Chunming Zhao, and Chunsheng Zhang.
\newblock Number of claims and ruin time for a refracted risk process.
\newblock {\em 2017 MATRIX Annals}, pages 559--578, 2019.

\bibitem{lkabous2017parisian}
Mohamed~Amine Lkabous, Irmina Czarna, and Jean-Fran{\c{c}}ois Renaud.
\newblock Parisian ruin for a refracted {L}{\'e}vy process.
\newblock {\em Insurance: Mathematics and Economics}, 74:153--163, 2017.

\bibitem{Nob2021}
K.~Noba.
\newblock On the optimality of double barrier strategies for {L}\'{e}vy
  processes.
\newblock {\em Stochastic Processes and their Applications}, 131:73--102, 2021.

\bibitem{NOBA2023174}
K.~Noba.
\newblock On the optimality of the refraction--reflection strategies for
  {L}\'{e}vy processes.
\newblock {\em Stochastic Processes and their Applications}, 160:174--217,
  2023.

\bibitem{noba2022stochastic}
K.~Noba and K.~Yamazaki.
\newblock On stochastic control under {P}oisson observations: optimality of a
  barrier strategy in a general {L}\'evy model.
\newblock {\em arXiv preprint arXiv:2210.00501}, 2022.

\bibitem{NobYam2022}
K.~Noba and K.~Yamazaki.
\newblock On singular control for {L}\'evy processes.
\newblock {\em Mathematics of Operations Research}, 48(3):1213--1234, 2023.

\bibitem{PerYam2017_1}
J.~L. P\'{e}rez and K.~Yamazaki.
\newblock Refraction-reflection strategies in the dual model.
\newblock {\em Astin Bulletin. The Journal of the International Actuarial
  Association}, 47(1):199--238, 2017.

\bibitem{renaud2014time}
Jean-Fran{\c{c}}ois Renaud.
\newblock On the time spent in the red by a refracted {L}{\'e}vy risk process.
\newblock {\em Journal of Applied Probability}, 51(4):1171--1188, 2014.

\bibitem{renaud2021stochastic}
Jean-Fran{\c{c}}ois Renaud and Clarence Simard.
\newblock A stochastic control problem with linearly bounded control rates in a
  {B}rownian model.
\newblock {\em SIAM Journal on Control and Optimization}, 59(5):3103--3117,
  2021.

\bibitem{wang2023refracted}
Z.~Wang, M.~A. Lkabous, and D.~Landriault.
\newblock A refracted {L}\'{e}vy process with delayed dividend pullbacks.
\newblock {\em Scandinavian Actuarial Journal}, pages 1--22, 2023.

\bibitem{Yam2017}
K.~Yamazaki.
\newblock Inventory control for spectrally positive {L}\'{e}vy demand
  processes.
\newblock {\em Mathematics of Operations Research}, 42(1):212--237, 2017.

\bibitem{yang2020parisian}
C.~Yang, K.~P. Sendova, and Z.~Li.
\newblock Parisian ruin with a threshold dividend strategy under the dual
  {L}\'{e}vy risk model.
\newblock {\em Insurance: Mathematics and Economics}, 90:135--150, 2020.

\bibitem{zhang2014perturbed}
Zhimin Zhang.
\newblock On a perturbed {S}parre {A}ndersen risk model with threshold dividend
  strategy and dependence.
\newblock {\em Journal of Computational and Applied Mathematics}, 255:248--269,
  2014.

\bibitem{zhou2017distribution}
Jiang Zhou and Lan Wu.
\newblock The distribution of refracted {L}{\'e}vy processes with jumps having
  rational laplace transforms.
\newblock {\em Journal of Applied Probability}, 54(4):1167--1192, 2017.

\end{thebibliography}

\end{document}